\documentclass[12pt]{article}

\usepackage{amssymb,amsthm,amsmath}
\usepackage{enumerate,bm,mathrsfs}
\usepackage{tabularx}
\usepackage{tikz,graphicx}
\usepackage{caption, subcaption}
\usepackage{tikz-3dplot,pgfplots}
\usepackage{geometry}
\usepackage{cleveref}

\pgfplotsset{compat=1.10}
\geometry{verbose,tmargin=2.1cm,bmargin=2.1cm,lmargin=2.1cm,rmargin=2.1cm,headheight=1.3cm,headsep=1.5cm,footskip=1.5cm}

\newtheorem{theorem}{Theorem}[section]
\newtheorem{corollary}[theorem]{Corollary}
\newtheorem{lemma}[theorem]{Lemma}
\newtheorem{proposition}[theorem]{Proposition}
\newtheorem{conjecture}[theorem]{Conjecture}
\newtheorem{assumption}[theorem]{Assumption}

\theoremstyle{definition}
\newtheorem{example}[theorem]{Example}
\newtheorem{definition}[theorem]{Definition}

\newenvironment{customlem}[1]
  {\customlemma}
  {\endcustomlemma}


\providecommand{\abs}[1]{\left\lvert#1\right\rvert}
\providecommand{\norm}[1]{\left\rVert#1\right\rVert}
\providecommand{\set}[1]{ \left\{ #1  \right\}  }
\providecommand{\setb}[2]{ \left\{ #1 \ \middle| \  #2 \right\}  }
\providecommand{\innprod}[2]{\left\langle #1, #2 \right\rangle}
\providecommand{\parenth}[1]{\left( #1 \right) }

\title{Interpolating splines on graphs for data science applications\thanks{This research was supported by grants 
DMS-1514789 and DMS-1813091 from the National Science Foundation.}}

\author{J. P. Ward\thanks{North Carolina A\&T State University, Greensboro, NC.} 
\and
F. J. Narcowich\thanks{Texas A\&M University, College Station, TX.}
\and
J. D. Ward\footnotemark[3]}

\begin{document}

\maketitle

\begin{abstract}
We introduce intrinsic interpolatory bases for data structured on graphs and derive properties of those bases. Polyharmonic Lagrange functions are shown to satisfy exponential decay away from their centers. The decay depends on the density of the zeros  of the Lagrange function, showing that they \textit{scale} with the density of the data. These results indicate that Lagrange-type bases are ideal building blocks for analyzing data on graphs,
and we illustrate their use in kernel-based machine learning applications.
\end{abstract}


\section{Introduction}

Graph, or network, domains are being used for many signal processing applications \cite{shuman13}. They provide a more general framework than integer lattices, and they can be used to incorporate additional structural or geometric information. 

Our purpose here is to develop and analyze intrinsic interpolatory and near-interpolatory bases for graphs, with the goal of introducing more approximation tools for graphs.  
Kernels on graphs were introduced in \cite{smola03} within the context of regularization operators. 
Both Gaussian and regularized Laplacian kernels were considered. Then, interpolants on graphs were considered in \cite{pesenson09}, where the author defines \emph{variational splines} and \emph{Lagrangian splines} that are similar to the interpolants that we propose. 
There, a perturbation factor is used to modify the Laplacian into a positive definite matrix, making the basis functions analogs of the Mat\'ern functions. 
Our approach is to work directly with the Laplacian, making our splines analogs of the polyharmonic splines. 
Another distinction is that we work on weighted graphs. 
Properties of splines are also discussed in \cite{pesenson10r}. In particular, the author considers  approximation in Paley-Wiener spaces. 
Note that while we are considering Lagrange functions, our approach (based on radial basis function theory) is different than that of \cite{pesenson09,pesenson10r,pesenson08}, and hence our notation is also different. Also, our main focus is in this paper is the decay/localization of these bases, which was not previously addressed.

Our main result is an estimate on the decay of Lagrange functions as this is a key first step toward related computationally efficient bases on graphs.
Our reference point is the theory of Lagrange bases on Euclidean domains and Riemannian manifolds. Lagrangian variational splines on general Riemannian manifolds were introduced in \cite{pesenson00}. Decay and localization properties were established in \cite{hangelbroek12plc,hangelbroek10}.   On $\mathbb{R}^d$, for example, it is known that polyharmonic interpolating splines satisfy fast decay rates and form stable bases of $L_p$ spaces, among other desirable properties of approximants. Moreover, on those continuous domains, it is known that more computationally efficient perturbations of the Lagrange functions (local Lagrange functions) satisfy similar properties. We believe all of these results should carry over to the graph setting. 

While we use the continuous domain theory as a theoretical guideline, we expect the continuous and discrete theories to be complementary. For example, there are empirically estimated constants in the continuous domain results that should be more accessible for graph domains.   One such constant determines how
many basis functions are needed to construct a \textit{good} local Lagrange function. If these constants could be worked out precisely for graphs, it could shed light on the situation in other settings.

Unlike the continuous domain, polyharmonic functions on graphs are not only bounded but also exhibit fast decay \cite{sun07}, so the decay of the Lagrange functions could be deduced from this. 
In \cite{sun07},  the author deals with classes of matrices in weighted spaces, and he shows that pseudo-inverses of such matrices live in the same class.
Our goal is to obtain more precise estimates of decay that depend on the density of the data, showing that the Lagrange functions are actually much more localized than the original basis functions.

There are numerous potential applications of interpolants and quasi-interpolants on graphs. In \cref{sec:example} we discuss one of these, namely kernel-based machine learning.  Regression and classification problems based on many parameters seem to be a natural application for a graph model. The space of parameters likely does not live in a natural continuous ambient space. Using graphs, we can define the relationships that make sense and specify precisely how connected two objects (vertices) are by a weighted edge. We present an algorithm for such problems and apply it to sample data sets.

The remainder of the paper is organized as follows.  We conclude this section with some notation and background information about functions on graphs. In \cref{sec:basis_functions}, we introduce the polyharmonic basis functions and verify some basic properties. \cref{sec:lagrange} contains our main results: the construction and properties of polyharmonic Lagrange functions on graphs. 
We define local Lagrange functions in \cref{sec:loc_lag}.
In \cref{sec:example}, we illustrate our results with some examples, 
and, in \cref{sec:discussion}, we summarize our results and discuss directions for future research.

\subsection{Setting}
\label{sec:setting}

The general setting is a finite, connected, weighted graph 
$\mathcal{G} 
= 
\set{
\mathcal{V},
\mathbf{E}\subset \mathcal{V} \times \mathcal{V}, 
w,
\rho
}$ 
where $\mathcal{V}$ is the vertex set,  
$\mathbf{E}$ is the edge set, 
and $w: \mathcal{V} \times \mathcal{V} \rightarrow \mathbb{R}_{\geq 0}$ is a symmetric weight function. The weight between two vertices is zero if and only if there is no edge connecting them.

We assume $\mathcal{G}$ has at least two vertices.
We denote the maximum degree of the vertices in the graph by $M \in \mathbb{Z}_{\geq 1}$.
The weight function $w$ specifies the adjacency matrix $A$ of the graph.  
The $j$th diagonal entry of the diagonal matrix $D$ is equal to the $j$th row sum of $A$.  

The function $\rho: \mathcal{V} \times \mathcal{V} \rightarrow \mathbb{R}_{\geq 0}$ is a distance function on the graph. We require $\rho$ to be positive on distinct pairs of vertices. 
We denote the maximum distance between adjacent vertices as $\rho_{\text{max}}$.
Given the distance between adjacent vertices, the distance between non-adjacent vertices is the length of the shortest path connecting them.  

For interpolation problems where data is known on a subset of the vertices, we assume this set is not empty and we denote it as $\widetilde{\mathcal{V}}$. 

The normalized graph Laplacian is $L = D^{-1/2}(D-A)D^{-1/2}$ \cite{benedetto15,chung97}. 
Let $\set{\Lambda_k}_{k=0}^{N-1}$ and $\set{\lambda_k}_{k=0}^{N-1}$ denote the eigenvectors and eigenvalues of the Laplacian. The Laplacian is a positive semi-definite matrix, with a zero eigenvalue of multiplicity one (since the graph is connected). We order the eigenvalues as
\begin{equation}
	0
	=
	\lambda_0
	<
	\lambda_1
	\leq
	\cdots
	\leq
	\lambda_{N-1}.
\end{equation} 
The eigenvector $\Lambda_0$ is a multiple of $D^{1/2}e$, where $e$ is a constant vector.

The eigenvalues of $L$ are bounded above by $2$. This follows from Theorem 2.7 of \cite{grigor18}.

\begin{example}
Let $\mathcal{G}$ be the cycle graph with $N \in \mathbb{Z}_{\geq 1}$ nodes and edges of equal weight and distance $1$. In this case the Laplacian is the circulant matrix with rows $\dots,0, 0,-1/2,1,-1/2,0,0,\dots$.
\end{example}

On a graph $\mathcal{G}$, we denote the closed ball of radius $r$ centered at $v$ as $B(v;r)$. The annulus centered at $v$ with inner and outer radii $r_0$ and $r_1$ respectively is denoted as $A(v;r_0,r_1)$. The annulus is defined by the set difference $B(v;r_1)\backslash B(v;r_0)$.  The radius $r$ is computed with respect to the distance function $\rho$.

\subsection{Background: Smoothness spaces}

A function on a graph is a mapping 
\begin{equation}
	f: \mathcal{V} \rightarrow \mathbb{R}.
\end{equation}
The space of square-summable functions on any subset $\mathcal{W} \subset \mathcal{V}$ is denoted 
$\ell_2(\mathcal{W})$. In some instances it is advantageous to specify these spaces in terms of the graph, so we also use the notation the notation $\ell_2(\mathcal{G})$ to represent functions on the vertex set of 
$\mathcal{G}$.

We shall primarily focus on (semi-)Hilbert spaces on graphs, and we measure smoothness using the Laplacian. For any $\alpha>0$, we define the Sobolev semi-norm 
\begin{equation}
	\abs{f}_{H_{2}^{\alpha}(\mathcal{G})}
	=
	\norm{L^{\alpha/2}f}_{\ell_2(\mathcal{G})}.
\end{equation}
For a subset $\mathcal{W} \subset \mathcal{V}$,
\begin{equation}
	\abs{f}_{H_{2}^{\alpha}(\mathcal{W})}
	=
	\norm{L^{\alpha/2}f}_{\ell_2(\mathcal{W})},
\end{equation}
where $L$ is the Laplacian on $\mathcal{G}$.

Note that since $L$ is real and symmetric, there is a real orthogonal matrix $P$ such that 
\begin{equation}\label{eq:orthogonal_decomp}
L
=
P^TDP
\end{equation} 
and $D$ is the diagonal matrix of eigenvalues \cite[Chapter 4]{horn12}.  Hence, $L^{\alpha/2}=P^TD^{\alpha/2}P$.

The Sobolev space $H_{2}^{\alpha}(\mathcal{G})$ is defined by the norm
\begin{equation}
	\norm{f}_{H_{2}^{\alpha}(\mathcal{G})}
	=
	\norm{f}_{\ell_2(\mathcal{G})}
	+
	\abs{f}_{H_{2}^{\alpha}(\mathcal{G})}.
\end{equation}
Note that every function is included in these spaces on a finite domain. The Sobolev norm allows us to measure the smoothness of a function, and larger values of $\alpha$ penalize large jumps more than smaller values. This is analogous to the scale of Sobolev spaces on continuous domains.

\section{Conditionally positive definite basis functions}
\label{sec:basis_functions}

On Euclidean spaces, polyharmonic splines are defined as Green's functions of powers of the Laplacian.  
On the graph, since the Laplacian has a zero eigenvalue, we use the column vectors of powers of $L^{\dagger}$, the Moore-Penrose pseudo-inverse of the Laplacian, as our Green's functions.  Using \eqref{eq:orthogonal_decomp}, the pseudo-inverse is 
\begin{equation}
L^{\dagger}
=
P^TD^{\dagger}P,
\end{equation}
where $D^{\dagger}$ is the diagonal matrix containing the multiplicative inverses of the non-zero entries of $D$. We refer the reader to \cite[Chapter 7]{horn12} for additional details.

\begin{definition}
On a graph $\mathcal{G}$, the order $2\alpha$ polyharmonic spline centered at $v_k\in\mathcal{V}$ is denoted as $\Phi_{\alpha}(\cdot,v_k)$. It is the $k$th column of $(L^{\dagger})^{\alpha}$.
\end{definition}

An important property of polyharmonic functions on $\mathbb{R}^d$ is that they are conditionally positive definite \cite{wendland05}, which means that one can use them to construct interpolants to scattered data.  Here we state this result for our polyharmonic splines on graphs.

\begin{proposition}
\label{prop:interpolant}
Given a graph $\mathcal{G}$, a collection of distinct vertices $\widetilde{\mathcal{V}} \subset \mathcal{V}$ 
and data 
$F = \set{f_{v}}_{v\in \widetilde{\mathcal{V}}} \subset \mathbb{R}$, we can form the interpolant
\begin{equation}\label{eq:interpolant}
	s_{F,\widetilde{\mathcal{V}}}
	=
	C\Lambda_0
	+
	\sum_{v\in \widetilde{\mathcal{V}}} \beta_{v} \Phi_{\alpha}(\cdot, v).
\end{equation}
satisfying
\begin{equation}
	f_{v_0}
	=
	C\Lambda_0
	+
	\sum_{v\in \widetilde{\mathcal{V}}} \beta_{v} \Phi_{\alpha}(v_0, v),
\end{equation}
for every $v_0\in\widetilde{\mathcal{V}}$. The vector of coefficients $\beta$ is orthogonal to 
$\Lambda_0$ restricted to $\widetilde{\mathcal{V}}$. Note that $\beta$, $C$, and hence the interpolant $s_{F,\widetilde{\mathcal{V}}}$ will depend on $\alpha$. 
\end{proposition}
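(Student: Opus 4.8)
The plan is to recast the interpolation conditions together with the side constraint as one bordered (saddle-point) linear system and then prove that its coefficient matrix is nonsingular. Writing $n = |\widetilde{\mathcal{V}}|$, let $\Phi$ be the $n \times n$ matrix obtained by restricting $(L^{\dagger})^{\alpha}$ to the rows and columns indexed by $\widetilde{\mathcal{V}}$, so that its entries are $\Phi_{\alpha}(v_i,v_j)$, and let $p = \restr{\Lambda_0}_{\widetilde{\mathcal{V}}}$. Reading $C\Lambda_0$ in the interpolation condition as $C\,\Lambda_0(v_0)$, the conditions together with $\innprod{\beta}{p}=0$ become
\begin{equation*}
\begin{pmatrix} \Phi & p \\ p^T & 0 \end{pmatrix}
\begin{pmatrix} \beta \\ C \end{pmatrix}
=
\begin{pmatrix} F \\ 0 \end{pmatrix}.
\end{equation*}
Since $(L^{\dagger})^{\alpha} = P^T (D^{\dagger})^{\alpha} P$ is symmetric, $\Phi$ is symmetric, and it suffices to show the bordered matrix is invertible, as this gives a unique $(\beta, C)$ and hence a well-defined $s_{F,\widetilde{\mathcal{V}}}$.

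The central step is to show that $\Phi$ is positive definite on the hyperplane $p^\perp = \setb{\beta}{p^T\beta = 0}$; this is the graph analogue of conditional positive definiteness. Let $R^T$ denote extension by zero from $\ell_2(\widetilde{\mathcal{V}})$ to $\ell_2(\mathcal{G})$, so that $\Phi = R(L^{\dagger})^{\alpha}R^T$ and $\beta^T\Phi\beta = (R^T\beta)^T(L^{\dagger})^{\alpha}(R^T\beta)$ for every $\beta$. Because $(D^{\dagger})^{\alpha}$ is diagonal with a single zero entry, in the coordinate of $\lambda_0$, and strictly positive entries elsewhere, the matrix $(L^{\dagger})^{\alpha}$ is positive semi-definite with null space exactly $\mathrm{span}(\Lambda_0)$, and its quadratic form vanishes precisely on that span. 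Now take $\beta \in p^\perp$ with $\beta^T\Phi\beta = 0$ and set $\tilde\beta = R^T\beta$. The vanishing quadratic form forces $\tilde\beta \in \mathrm{span}(\Lambda_0)$, while the constraint $p^T\beta = 0$ reads $\innprod{\Lambda_0}{\tilde\beta} = 0$; together these give $\tilde\beta = 0$, hence $\beta = 0$, which proves the claim.

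It remains to assemble these facts. First, $p \neq 0$: the eigenvector $\Lambda_0$ is a nonzero multiple of $D^{1/2}e$, and since the degrees on the diagonal of $D$ are positive, $\Lambda_0$ has no vanishing component, so its restriction to the nonempty set $\widetilde{\mathcal{V}}$ is nonzero. Now suppose $(\beta, C)$ lies in the kernel of the bordered matrix, so $\Phi\beta + Cp = 0$ and $p^T\beta = 0$. Pairing the first relation with $\beta$ gives $\beta^T\Phi\beta = 0$ with $\beta \in p^\perp$, whence $\beta = 0$ by the previous paragraph; then $Cp = 0$ with $p \neq 0$ forces $C = 0$. Thus the bordered matrix has trivial kernel and is invertible, establishing existence and uniqueness of the interpolant. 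I expect the only real obstacle to be the conditional positive definiteness argument of the second paragraph, namely correctly matching the side constraint on the restricted vector $\beta$ with orthogonality of its zero-extension to $\Lambda_0$; the remaining steps are routine once the saddle-point formulation is fixed.
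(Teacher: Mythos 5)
Your proposal is correct and follows essentially the same route as the paper: the paper's (very terse) proof asserts that the pseudo-inverse is conditionally positive definite with null direction $\Lambda_0$, that the submatrix indexed by $\widetilde{\mathcal{V}}$ inherits this, and defers the solvability of the augmented (bordered) system to the standard reference; you have simply written out those steps in full, including the verification that $\restr{\Lambda_0}{\widetilde{\mathcal{V}}}\neq 0$ and the kernel argument for the saddle-point matrix.
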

\begin{proof}
The Laplacian has a single zero eigenvalue with corresponding eigenvector $\Lambda_0$. The same is true for the pseudo-inverse of the Laplacian. Also note that the pseudo-inverse is conditionally positive definite. Hence the submatrix corresponding to the rows and columns associated with $\widetilde{\mathcal{V}}$ is also conditionally positive definite. 
\end{proof}

In the previous proposition, the coefficients $\beta_{w}$ and constant $C$ are found by solving a matrix equation involving an augmented interpolation matrix \cite[Section 8.5]{wendland05}.

\subsection{Variational characterization}

Another important property of conditionally positive definite functions is a variational characterization. In particular, an interpolant of the form \eqref{eq:interpolant} should have the smallest norm over the collection of all interpolants in a particular semi-Hilbert space. The semi-Hilbert space in question is the reproducing kernel Hilbert space (or \emph{native space}) associated with 
the kernel $(L^{\dagger})^{\alpha}$. 

\begin{definition}
The native space $\mathcal{N}_{\alpha}$ for the kernel $(L^{\dagger})^{\alpha}$ is defined by the semi-inner product 
\begin{equation}
	\innprod{f}{g}_{\mathcal{N}_{\alpha}}
	=
	\innprod{L^{\alpha/2}f}{L^{\alpha/2}g}_{\ell_2(\mathcal{G})},
\end{equation}
where $\innprod{\cdot}{\cdot}_{\ell_2(\mathcal{G})}$ is the standard inner product on $\ell_2(\mathcal{G})$. The semi-norm for $\mathcal{N}_{\alpha}$ is denoted as $\abs{\cdot}_{H_{2}^{\alpha}(\mathcal{G})}$.
\end{definition}

\begin{proposition}
Under the assumptions of \cref{prop:interpolant},  the interpolant $s_{F,\widetilde{\mathcal{V}}}$ has minimal native space semi-norm over all interpolants in $\mathcal{N}_{\alpha}$.
\end{proposition}
\begin{proof}
The proof of this result is analogous to that of \cite[Theorem 13.2]{wendland05}. 
We provide the details for the benefit of the reader.

First, suppose $g:\mathcal{V} \rightarrow \mathbb{R}$ is $0$ on  $\widetilde{\mathcal{V}}$.  
We have
\begin{align}
\begin{split}
	\innprod{g}{s_{F,\widetilde{\mathcal{V}} }}_{\mathcal{N}_{\alpha}}
	&=
	\innprod{g}
	{ \sum_{v\in \widetilde{\mathcal{V}}} \beta_{v} \Phi_{\alpha}(\cdot, v)  }
	_{\mathcal{N}_{\alpha}}	\\
	&=
	\sum_{v\in \widetilde{\mathcal{V}}} \beta_{v} g(v)\\
	&=
	0.
\end{split}
\end{align}
Hence for any interpolant $s \in \mathcal{N}_{\alpha}$,
\begin{align}
\begin{split}
	\abs{s_{F,\widetilde{\mathcal{V}}}}_{\mathcal{N}_{\alpha}}^2
	&=
	\innprod{s_{F,\widetilde{\mathcal{V}}}}{s_{F,\widetilde{\mathcal{V}}} }_
	{\mathcal{N}_{\alpha}} \\
	&= 
	\innprod{s_{F,\widetilde{\mathcal{V}}}}{s_{F,\widetilde{\mathcal{V}}}-s+s }_
	{\mathcal{N}_{\alpha}} \\
	&=
	\innprod{s_{F,\widetilde{\mathcal{V}}}}{s }_
	{\mathcal{N}_{\alpha}} \\
	&\leq
	\abs{s_{F,\widetilde{\mathcal{V}}}}_{\mathcal{N}_{\alpha}}
	\abs{s}_{\mathcal{N}_{\alpha}}.
\end{split}
\end{align}
Dividing by $\abs{s_{F,\widetilde{\mathcal{V}}}}_{\mathcal{N}_{\alpha}}$ results in
\begin{equation}
\abs{s_{F,\widetilde{\mathcal{V}}}}_{\mathcal{N}_{\alpha}}
\leq
\abs{s}_{\mathcal{N}_{\alpha}}.
\end{equation}
\end{proof}

\section{Polyharmonic Lagrange functions}
\label{sec:lagrange}

We are now in a position to define the polyharmonic Lagrange functions, which are the primary objects of study in this paper. Similar to the classical polynomial Lagrange functions, they are bases for constructing interpolants at a collection of 
vertices $\widetilde{\mathcal{V}} \subset \mathcal{V}$. The Lagrange function $\chi(\cdot, v_0)$ centered at $v_{0}\in\widetilde{\mathcal{V}}$ is a linear combination  of polyharmonic splines
\begin{equation}
	\chi(\cdot, v_{0})
	=
	C\Lambda_0
	+
	\sum_{v\in \widetilde{\mathcal{V}}} \beta_{v} \Phi_{\alpha}(\cdot,v).
\end{equation}
that satisfies
\begin{equation}
	\chi(v, v_{0})
	=
	\begin{cases}
	1, & v = v_0\\
	0, & v\in \widetilde{\mathcal{V}} \backslash \set{v_0}
	\end{cases}.
\end{equation}
The Lagrange functions depend on the parameter $\alpha$. 

\subsection{Decay estimates for Lagrange functions}

Our interest in Lagrange functions is based on their usefulness in constructing approximations to functions $f:\mathcal{V}\rightarrow \mathbb{R}$ that are only known on a subset $\widetilde{\mathcal{V}} \subset\mathcal{V}$. In particular,
\begin{enumerate}[i)]
\item
The form of the approximation is very simple:
\begin{equation}
	\sum_{v \in \widetilde{\mathcal{V}}} f(v) \chi(\cdot,v)
\end{equation}
\item 
As we shall see, the polyharmonic Lagrange functions are well localized, as are their continuous counterparts \cite{hangelbroek12prk}. 
\end{enumerate}

In applications, localized bases are important for several reasons. For example,
errors in the acquired sample data are confined to a small region. Also, if new data is acquired, the approximation can be updated locally.

In order to establish the decay of the Lagrange functions, we use a bulk chasing argument similar to the one used in \cite{matveev92}. This argument makes use of a zeros lemma, which we state as an assumption to preserve generality. Afterward, we verify it for certain classes of graphs.  Essentially, it requires the interpolation vertices $\widetilde{\mathcal{V}}$ to be dense in $\mathcal{G}$, where we measure density in terms of the fill distance.

\begin{definition}
The fill distance $h$ of a subset $\widetilde{\mathcal{V}}\subset \mathcal{V}$ is defined as
\begin{equation}
	h 
	:=
	\max_{v\in \mathcal{V}}
	\min_{\tilde{v} \in \widetilde{\mathcal{V}} }
	\rho(v,\tilde{v}).
\end{equation}
\end{definition}

We now state the assumption, which says that for functions with lots of zeros, the $\ell_2$ norm is controlled by the Sobolev semi-norm. A continuous domain version can be found in \cite[Section 7.4]{natterer86}.

\begin{assumption}\label{as:zero_lem}
For a graph 
$\mathcal{G} 
= 
\set{
\mathcal{V},
\mathbf{E}\subset \mathcal{V} \times \mathcal{V}, 
w,\rho
}$, 
we assume that there is a nonempty subset 
$\widetilde{\mathcal{V}}\subset \mathcal{V}$ with fill distance $h$ satisfying the following property.
There exists a constant $C>0$ (depending on $\widetilde{\mathcal{V}}$) such that for every $f:\mathcal{V}\rightarrow \mathbb{R}$ that is zero on  
$\widetilde{\mathcal{V}}$ and for every $\mathcal{V}_1 \subset \mathcal{V}$ 
\begin{equation}\label{eq:assumption_zeros}
	\norm{f}_{\ell_2(\mathcal{V}_1)   }^2
	\leq
	C
	\abs{f}_{H_2^{2}( \bar{\mathcal{V}}_1  ) }^2,
\end{equation}
where 
\begin{equation}
	\bar{\mathcal{V}}_1
	:=
	\setb{v\in \mathcal{V}}
	{\rho\parenth{v,\mathcal{V}_1}\leq 2h}.
\end{equation}
\end{assumption}
A non-local version of this result for general graphs is provided in \ref{sec:zero_lem}. 
Let us also point out that \cref{eq:assumption_zeros} is similar to a Poincare inequality \cite{pesenson08,pesenson10s,fuhr13}. There the authors give several estimates for such inequalities on various type of graphs.

To verify the assumption above, we make use of coverings of a graph by subgraphs satisfying certain properties, primarily Dirichlet boundary condition. Properties of the Laplacian on such subgraphs can be found in \cite{chung97,grigor18}. In particular, we are interested in the eigenvalues of the submatrix of the Laplacian corresponding to these subgraphs. Bounds on the smallest eigenvalue, called the Dirichlet eigenvalue for Dirichlet boundary conditions, can be found in terms of the Cheeger constant of the subgraph  \cite{chung97,grigor18}.

\begin{proposition}\label{prop:as_zeros_circ}
\cref{as:zero_lem} is valid for functions on cycle graphs, independent of the distance function $\rho$.
\end{proposition}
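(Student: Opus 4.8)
The plan is to reduce this apparently global estimate to a family of one-dimensional Dirichlet problems, one for each arc of the cycle cut out by the interpolation set $\widetilde{\mathcal{V}}$. The first thing I would record is the explicit form of the operator: on an (equal-weight) cycle the normalized Laplacian is the fixed circulant second-difference operator $(Lf)(v_j) = f(v_j) - \tfrac12\bigl(f(v_{j-1}) + f(v_{j+1})\bigr)$ exhibited in the cycle-graph example, and this form is completely insensitive to $\rho$. This is the structural reason to expect a constant $C$ independent of the distance function: the map $f \mapsto Lf$ is a purely combinatorial operator, and only the neighborhood $\bar{\mathcal{V}}_1$ feels the metric at all.

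Next I would cut the cycle at the vertices of $\widetilde{\mathcal{V}}$ into arcs. On a single arc $\mathcal{A}$ with endpoints $z_L, z_R \in \widetilde{\mathcal{V}}$, any admissible $f$ satisfies $f(z_L) = f(z_R) = 0$, so the restriction of $Lf$ to the interior of $\mathcal{A}$ coincides with $L_{\mathcal{A}} f$, where $L_{\mathcal{A}}$ is the symmetric tridiagonal Dirichlet submatrix of $L$ on the interior vertices (the endpoint contributions drop out precisely because $f$ vanishes there). The core one-dimensional estimate is then the zeros/Poincar\'e inequality $\|f\|_{\ell_2(\mathrm{int}\,\mathcal{A})}^2 \le \lambda_{\min}(L_{\mathcal{A}})^{-2}\,\|Lf\|_{\ell_2(\mathrm{int}\,\mathcal{A})}^2$, obtained by diagonalizing $L_{\mathcal{A}}$ and bounding its smallest eigenvalue below. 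Since $L_{\mathcal{A}}$ depends only on the number $m$ of edges in the arc and not on $\rho$, this eigenvalue is controlled uniformly: either explicitly by $\lambda_{\min}(L_{\mathcal{A}}) = 2\sin^2\!\bigl(\tfrac{\pi}{2m}\bigr)$, or, following the Cheeger-constant bounds for the Dirichlet eigenvalue noted just before the statement. Taking the worst arc, which has at most $N-1$ interior vertices, yields a single $\rho$-independent constant $C$.

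Finally I would assemble the local statement. Given $\mathcal{V}_1$, the arcs missing $\mathcal{V}_1$ contribute nothing to the left-hand side, so I sum the per-arc inequalities only over arcs meeting $\mathcal{V}_1$; because the arc interiors are pairwise disjoint, the right-hand sides add with no overcounting. The delicate point, and the step I expect to be the main obstacle, is to show that every arc meeting $\mathcal{V}_1$ lies inside $\bar{\mathcal{V}}_1$, so that the $Lf$-values used in the per-arc estimates are genuinely counted in $\abs{f}_{H_2^2(\bar{\mathcal{V}}_1)}$. This is exactly where the metric ball $\bar{\mathcal{V}}_1 = \{v : \rho(v,\mathcal{V}_1)\le 2h\}$ must be reconciled with the combinatorial arcs: one has to use the fill-distance hypothesis, which forces every interior vertex of an arc to lie within $h$ of an endpoint, to bound the $\rho$-extent of an arc in terms of $h$ and thereby fit it inside the $2h$-neighborhood of whichever of its vertices belongs to $\mathcal{V}_1$. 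Securing this containment, and with it the passage from the easy non-local estimate to the genuinely local one required by \cref{as:zero_lem}, is where the real work lies.
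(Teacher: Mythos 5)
Your proposal is correct and follows essentially the same route as the paper: cover the cycle by path subgraphs whose endpoints are interpolation vertices (so that $f$ satisfies Dirichlet boundary conditions there), bound the smallest eigenvalue of the corresponding Dirichlet submatrix of $L$ from below --- which is $\rho$-independent because $L$ depends only on the weights --- and sum the resulting local Poincar\'e inequalities. The only differences are minor: the paper's paths span every \emph{other} zero, so their interiors cover all of $\mathcal{V}$ at the cost of a factor-$2$ overlap, whereas your consecutive-zero arcs have disjoint interiors (which suffices since $f$ vanishes on $\widetilde{\mathcal{V}}$), and the containment of the covering inside $\bar{\mathcal{V}}_1$ that you rightly flag as the delicate step is treated in the paper at the same level of detail, by simply asserting that each path has length at most $4h$.
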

\begin{proof}
Let $\mathcal{G}$ be a cycle graph with vertices
$\mathcal{V}=\set{v_{n}}_{k=0}^{N-1}$
with $N\geq 3$. Assume the vertices are labeled such that $v_0$ is adjacent to $v_1$, $v_1$ is adjacent to $v_2$, \dots, $v_{N-2}$ is adjacent to $v_{N-1}$ and  $v_{N-1}$ is adjacent to $v_{0}$. Denote the vertices of 
$\widetilde{\mathcal{V}}$ as $\set{v_{n_k}}_{k=0}^{N_1-1}$ where $n_k<n_{k+1}$ for all $k$, and let $f$ be zero on $\widetilde{\mathcal{V}}$.

The idea for the proof is to cover $\mathcal{V}_1$ (in fact we cover $\mathcal{G}$ to remove the dependence on  $\mathcal{V}_1$) by subgraphs $\mathcal{G}_k$, and show that the inequality holds on these smaller regions.  We define the subgraphs so that they do not overlap too much, and therefore the inequality holds on unions of the $\mathcal{G}_k$.

For simplicity, suppose $N_1$ is even; our argument can be suitably modified otherwise.  Define the following induced subgraphs (which are paths connecting every other zero of $f$):
\begin{align}
\begin{split}
	\mathcal{G}_0 
	&= 
	\set{v_{n_0},v_{n_0+1},\dots, v_{n_2} }\\
	\mathcal{G}_1 
	&= 
	\set{v_{n_1},v_{n_1+1},\dots, v_{n_3} }\\
	&\vdots\\
	\mathcal{G}_{N_1-3} 
	&= 
	\set{v_{n_{N_1-3}},v_{n_{N_1-3}+1},\dots, v_{n_{N_1-1}} }\\
	\mathcal{G}_{N_1-2}
	&= 
	\set{v_{n_{N_1-2}},v_{n_{N_1-2}+1},\dots, v_{N-1},v_0,\dots, v_{n_0} }\\
	\mathcal{G}_{N_1-1}
	&= 
	\set{v_{n_{N_1-1}},v_{n_{N_1-1}+1},\dots, v_{N-1},v_0,\dots, v_{n_1} }.
\end{split}
\end{align}
The length of each path 
$\mathcal{G}_k$ is at most $4h$, where $h$ is the fill distance. For any given set 
$\mathcal{V}_1$, there is a union of $\mathcal{G}_k$'s such that the union of their interiors (denoted 
$\mathrm{int}\parenth{\mathcal{G}_k}$) cover $\mathcal{V}_1$ and is contained in $\bar{\mathcal{V}}_1$. Let $\mathcal{K}$ be the index set for such a collection; i.e.,
\begin{equation}
	\mathcal{V}_1
	\subseteq
	\bigcup_{k\in \mathcal{K}}
	\mathrm{int}\parenth{\mathcal{G}_k}
	\subseteq
	\bar{\mathcal{V}}_1
\end{equation}
Also, note that each vertex is in at most two paths 
$\mathrm{int}\parenth{\mathcal{G}_k}$.

The subgraphs $\mathcal{G}_k$ were constructed so that  the function $f$ satisfies Dirichlet boundary conditions on each one.  
For each $k$, consider the submatrix 
$L_{\mathrm{int} \parenth{\mathcal{G}_k}}$ of the Laplacian, where the rows and columns correspond to the vertices of 
$\mathrm{int} \parenth{\mathcal{G}_k}$.  
The minimal eigenvalue of this matrix (the Dirichlet eigenvalue \cite{chung97}) is positive. 
We denote the Dirichlet eigenvalue on $\mathrm{int}\parenth{\mathcal{G}_k}$ as 
$\lambda_0^{\mathcal{G}_k}$. 
 
We now have
\begin{align}
\begin{split}
	\norm{f}_{\ell_2(\mathcal{V}_1)}^2
	&\leq
	\sum_{k\in \mathcal{K}}
	\norm{f}_{\ell_2( \mathrm{int} \parenth{\mathcal{G}_k} )}^2\\
	&\leq
	\sum_{k\in \mathcal{K}}
	\parenth{
	\frac{1}{\lambda_0^{\mathcal{G}_k}}}^2
	\norm{L f}_
	{\ell_2( \mathrm{int} \parenth{\mathcal{G}_k} )}^2\\
	&\leq
	\max_{0 \leq k\leq N_1-1} 
	\set{ \parenth{\frac{1}{\lambda_0^{\mathcal{G}_k}}}^2}
	\sum_{k\in \mathcal{K}}
	\norm{L f}_
	{\ell_2( \mathrm{int} \parenth{\mathcal{G}_k} )}^2\\
	&\leq
	2
	\max_{0 \leq k\leq N_1-1} 
	\set{ \parenth{\frac{1}{\lambda_0^{\mathcal{G}_k}}}^2}
	\norm{L f}_
	{\ell_2( \bar{\mathcal{V}}_1 )}^2.
\end{split}
\end{align}
\end{proof}

In the proof of the previous proposition, we derived a constant $C$ for \eqref{eq:assumption_zeros}. The constant depends only on the Dirichlet eigenvalues for the subgraphs $\mathcal{G}_k$, which were constructed to fit the density of the vertices $\widetilde{\mathcal{V}}$ in $\mathcal{V}$. In particular, this means that
the constant does not depend on the size of the graph.  
For example, if a cycle graph has equally weighted edges and bounded fill distance $h$, then the constant $C$ will have a bound that is independent of the number of vertices $N$. In this case, the Dirichlet eigenvalues are actually known, cf. Lemma I.14 of  
\cite{ellis02}.

In order to verify \cref{as:zero_lem}, we believe that it is necessary to have a covering of the graph $\mathcal{G}$ by subgraphs satisfying some boundary conditions. In \cref{prop:as_zeros_circ}, the subgraphs satisfy Dirichlet boundary condition. This is relevant to certain machine learning applications where training data sets are larger than the collection of unknowns being evaluated.
Also note that one could consider other boundary conditions such as Neumann boundary conditions.
The next proposition describes how \cref{as:zero_lem} is satisfied by certain graphs constructed for machine learning.

\begin{proposition}\label{prop:as_zeros_gen}
For a graph $\mathcal{G}$, suppose the vertices are divided into two sets 
$\mathcal{V}_k$ and $\mathcal{V}_u$ corresponding to known and unknown values for some function on $\mathcal{G}$, and suppose that there are no edges connecting distinct vertices in $\mathcal{V}_u$. If the length of the edges in $\mathcal{G}$ are bounded below by 
$\rho_{\text{max}}/2$ and $w=\rho^{-1}$ on edges within the graph, then $\mathcal{G}$ satisfies \cref{as:zero_lem} with 
$\widetilde{\mathcal{V}}=\mathcal{V}_k$.
\end{proposition}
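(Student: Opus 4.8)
The plan is to use the combinatorial hypothesis that $\mathcal{V}_u$ is an independent set, which forces the normalized Laplacian to act almost trivially on any function vanishing on $\mathcal{V}_k=\widetilde{\mathcal{V}}$. First I would record the pointwise form of the operator. Writing $L=I-D^{-1/2}AD^{-1/2}$ and denoting by $d(v)$ the weighted degree of $v$, one has
\[
	(Lf)(v)
	=
	f(v)
	-
	\sum_{u\sim v}
	\frac{w(v,u)}{\sqrt{d(v)\,d(u)}}\, f(u),
\]
the sum being over the neighbors of $v$. Since $\mathcal{G}$ is connected with at least two vertices, every degree is positive, so $L$ and this expression are well defined.

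The key step is to evaluate this identity at an unknown vertex. Let $f$ be zero on $\widetilde{\mathcal{V}}=\mathcal{V}_k$ and fix $v\in\mathcal{V}_u$. Because no edge joins two distinct vertices of $\mathcal{V}_u$, every neighbor $u$ of $v$ lies in $\mathcal{V}_k$, whence $f(u)=0$ and the entire sum drops out. Therefore $(Lf)(v)=f(v)$ for every $v\in\mathcal{V}_u$. This is the heart of the argument, and it is exactly where the independence hypothesis is used.

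With this identity in hand, the inequality of \cref{as:zero_lem} follows with constant $C=1$. Given any $\mathcal{V}_1\subset\mathcal{V}$, I would split the norm using the partition: since $f$ vanishes on $\mathcal{V}_k$,
\[
	\norm{f}_{\ell_2(\mathcal{V}_1)}^2
	=
	\sum_{v\in\mathcal{V}_1\cap\mathcal{V}_u} f(v)^2
	=
	\sum_{v\in\mathcal{V}_1\cap\mathcal{V}_u} (Lf)(v)^2
	\leq
	\sum_{v\in\mathcal{V}_1} (Lf)(v)^2 .
\]
Since $\rho(v,\mathcal{V}_1)=0\leq 2h$ for every $v\in\mathcal{V}_1$, we have $\mathcal{V}_1\subseteq\bar{\mathcal{V}}_1$; as all summands are nonnegative, the last sum is bounded by $\norm{Lf}_{\ell_2(\bar{\mathcal{V}}_1)}^2=\abs{f}_{H_2^2(\bar{\mathcal{V}}_1)}^2$, which is the desired estimate.

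The part that demands the most care is interpreting the two geometric hypotheses — that edge lengths are at least $\rho_{\text{max}}/2$ and that $w=\rho^{-1}$ on edges. They do not enter the inequality itself, since the pointwise identity already delivers $C=1$; rather, they appear to calibrate the geometry so that the fill distance $h$ and the neighborhood $\bar{\mathcal{V}}_1$ are meaningful. In writing the proof I would make explicit that $\widetilde{\mathcal{V}}=\mathcal{V}_k$ has finite fill distance (each unknown vertex sits within $\rho_{\text{max}}$ of a known one) and that these conditions keep the weighted degrees comparable, while emphasizing that the independence of $\mathcal{V}_u$ is what does the real work.
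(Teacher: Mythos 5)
Your proof is correct, but it takes a genuinely different route from the paper's. The paper reuses the covering template of the cycle-graph case: it covers $\mathcal{V}_1$ by small subgraphs $\mathcal{G}_v$ whose interiors have boundary in $\mathcal{V}_k$, bounds the Dirichlet eigenvalue of each from below via the Cheeger constant (this is where the hypotheses $w=\rho^{-1}$, the edge-length lower bound, and the maximum degree $M$ enter), and then sums over the cover using bounded overlap, arriving at a constant of the form $64(M+1)^4M^5$. You instead exploit the independence of $\mathcal{V}_u$ directly: since every neighbor of an unknown vertex is known, $(Lf)(v)=f(v)$ pointwise on $\mathcal{V}_u$ for any $f$ vanishing on $\mathcal{V}_k$, and the inequality of \cref{as:zero_lem} follows with $C=1$ without any eigenvalue estimates, without the degree bound, and — as you correctly observe — without the two geometric hypotheses at all (they are only needed in the paper's argument to control the Cheeger constant and to keep the covering subgraphs inside $\bar{\mathcal{V}}_1$; your estimate never leaves $\mathcal{V}_1\subseteq\bar{\mathcal{V}}_1$). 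Your constant is sharper and your statement is strictly stronger. What the paper's approach buys is generality: the Dirichlet-subgraph/Cheeger template is the one that also underlies \cref{prop:as_zeros_circ} and would survive if $\mathcal{V}_u$ were merely ``sparse'' rather than independent, whereas your identity $(Lf)|_{\mathcal{V}_u}=f|_{\mathcal{V}_u}$ collapses the moment two unknown vertices share an edge. Two small points to tighten: state explicitly that the graph has no self-loops (otherwise a loop at $v\in\mathcal{V}_u$ gives $(Lf)(v)=(1-w(v,v)/d(v))f(v)$ and the constant degrades), and drop the vague remark about the hypotheses ``keeping the weighted degrees comparable'' — they play no role in your argument and the claim is not needed.
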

\begin{proof}
The idea for the proof is the same as for cycle graphs. We need to define a cover of $\mathcal{G}$ by subgraphs that have bounded Dirichlet eigenvalues. Moreover, the subgraphs cannot be too large; in addition to covering 
$\mathcal{V}_1$, they must lie within 
$\bar{\mathcal{V}}_1$. This is the reason for the restriction on the edge distances. The fact that the minimum eigenvalue of the Laplacian on these subgraphs is bounded will follow from a bound on the Cheeger constant for the subgraph \cite[Chapters 3,4]{grigor18}.

We construct a subgraph, starting from a given vertex $v_0$ and grow the subgraph until it is surrounded by vertices from 
$\mathcal{V}_k$.  We introduce the notation 
$\mathcal{N}_v$ for the neighbors of a vertex $v$.
To make our argument precise, let $v_0\in \mathcal{V}$, and consider the subgraph $\mathcal{G}_{v_0}$ consisting of the vertices
\begin{equation}
\Omega_{v_0}
:=
\set{v_0}
\bigcup
\mathcal{N}_{v_0}
\bigcup
\parenth{
\bigcup_{v\in \mathcal{N}_{v_0}\bigcap \mathcal{V}_u }
\mathcal{N}_{v}
},
\end{equation}
i.e. $\Omega_{v_0}$ consists of the neighbors of $v_0$ and the neighbors of the neighbors that are in $\mathcal{V}_u$. As no two vertices of  
$\mathcal{V}_u$ are connected by an edge, the boundary of 
int$(\mathcal{G}_{v_0})$ consists entirely of vertices from 
$\mathcal{V}_k$. 

To bound the Cheeger constant, and hence the minimum eigenvalue, on $\text{int}(\mathcal{G}_{v_0})$, we note the following:
\begin{enumerate}[a)]
\item
The number of vertices in $\mathcal{G}_{v_0}$, depending on the maximum degree $M$, is at most $1+M^2$
\item 
Given a nonempty subset $U$ of  
$\text{int}(\mathcal{G}_{v_0})$, the sum of the edge weights between $U$ and its boundary in $\mathcal{G}$ is at least 
$\rho_{\text{max}}^{-1}$. Also, the sum of the edge weights (counting multiplicities) over the vertices of $U$ is at most 
$(M+1)M2\rho_{\text{max}}^{-1}$
\end{enumerate}

Using these facts, we find a lower bound for the Cheeger constant for subgraphs of this type \cite[Section 4.2]{grigor18}. In particular the Cheeger constant $\bar{h}$ satisfies
\begin{align}
\begin{split}
\bar{h}(\text{int}(\mathcal{G}_{v_0}))
&\geq
\frac{\rho_{\text{max}}^{-1}}{(M+1)M2\rho_{\text{max}}^{-1}}\\
&=
\frac{1}{(M+1)M2}
\end{split}
\end{align}
Cheeger's inequality then implies that the minimum eigenvalue 
$\lambda_0^{\mathcal{G}_{v_0}}$ of
$L_{\mathrm{int} \parenth{\mathcal{G}_{v_0}}}$ satisfies
\begin{align}
\begin{split}
\lambda_0^{\mathcal{G}_{v_0}}
&\geq
\frac{1}{2}
\bar{h}(\text{int}(\mathcal{G}_{v_0}))^2\\
&\geq
\frac{1}{8}\parenth{\frac{1}{(M+1)M}}^2,
\end{split}
\end{align}
so
\begin{align}
\parenth{\frac{1}{\lambda_0^{\mathcal{G}_{v_0}}}}^2
&\leq
64(M+1)^4M^4.
\end{align}

Now, given a set $\mathcal{V}_1$, we cover it by 
\begin{equation}
\bigcup_{v\in\mathcal{V}_1} \text{int}(\mathcal{G}_{v}) 
\subseteq
\bar{\mathcal{V}}_1. 
\end{equation}
Containment in $\bar{\mathcal{V}}_1$ follows from the fact that 
$\rho_{\text{max}}\leq 2(\rho_{\text{max}}/2)\leq 2h$. We also note that the construction of $\mathcal{G}_{v_0}$ means that any given vertex $v\in \mathcal{G}$ can be in the interior of at most $M$ subgraphs of this type, and the number of vertices in 
$\mathcal{G}_{v_0}$, depending on the maximum degree $M$, is at most $1+M^2$.

We now 
finish the proof. Given any function $f$ that is zero on 
$\mathcal{V}_k$, we have
\begin{align}
\begin{split}
	\norm{f}_{\ell_2(\mathcal{V}_1)}^2
	&\leq
	\sum_{v\in \mathcal{V}_1}
	\norm{f}_{\ell_2( \mathrm{int} \parenth{\mathcal{G}_{v}} )}^2\\
	&\leq
	\sum_{v\in \mathcal{V}_1}
	\parenth{
	\frac{1}{\lambda_0^{\mathcal{G}_v}}}^2
	\norm{L f}_
	{\ell_2( \mathrm{int} \parenth{\mathcal{G}_v} )}^2\\
	&\leq
	64(M+1)^4M^4
	\sum_{v\in \mathcal{V}_2}
	\norm{L f}_
	{\ell_2( \mathrm{int} \parenth{\mathcal{G}_v} )}^2\\
	&\leq
	64(M+1)^4M^5
	\norm{L f}_
	{\ell_2( \bar{\mathcal{V}}_1 )}^2.
\end{split}
\end{align}
\end{proof}

We now proceed to a key lemma for deriving the decay of the Lagrange functions.

\begin{lemma}\label{lem:bulk_1}
Let 
$\mathcal{G} 
= 
\set{
\mathcal{V},
\mathbf{E}\subset \mathcal{V} \times \mathcal{V}, 
w,\rho
}$  
and  $\widetilde{\mathcal{V}} \subset \mathcal{V}$ satisfy the hypotheses of 
\cref{as:zero_lem}. 
Let $v_{0}\in \widetilde{\mathcal{V}}$, and consider the  Lagrange function $\chi(\cdot,v_{0})$ associated with 
$(L^{\dagger})^2$ and the vertex set $\widetilde{\mathcal{V}}$ with fill distance $h$. 
If $0 < 3\rho_{\text{max}}+2h < r_2 < r_3 <\infty$, then there is a constant $\mu<1$ such that 
\begin{equation}
	\abs{\chi}_{H_2^{2}(\mathcal{G}\backslash B(v_{n_0};r_4 )  ) }
	\leq
	\mu
	\abs{\chi}_{H_2^{2}(\mathcal{G}\backslash B(v_{n_0};r_1 )  ) },
\end{equation}
where $r_1:= r_2-2\rho_{\text{max}}-2h$ and $r_4:=r_3+2h$.
\end{lemma}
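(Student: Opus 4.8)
The plan is to run a \emph{bulk-chasing} (hole-filling) argument driven by the variational characterization of $\chi$. Since $\chi$ is the minimal-$\mathcal{N}_2$-norm interpolant of the Kronecker data at $v_0$, the orthogonality relation established in the proof of the variational proposition gives $\innprod{\chi}{g}_{\mathcal{N}_2}=\innprod{L\chi}{Lg}_{\ell_2(\mathcal{G})}=0$ for every $g$ that vanishes on $\widetilde{\mathcal{V}}$; here $\alpha=2$, so the $\mathcal{N}_2$ semi-inner product is $\innprod{L\,\cdot}{L\,\cdot}_{\ell_2}$ and $\abs{\cdot}_{H_2^2}=\norm{L\,\cdot}_{\ell_2}$. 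I would feed a cutoff of $\chi$ into this relation and extract a recursion relating the energy of $\chi$ on two nested exterior regions.

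Concretely, I would fix a cutoff $\psi:\mathcal{V}\to[0,1]$ defined through the distance $\rho(\cdot,v_0)$, Lipschitz with constant $\mathcal{O}(1/(r_3-r_2))$, that equals $0$ on $B(v_0;r_2)$, ramps up across the transition annulus, and equals $1$ on $\mathcal{G}\backslash B(v_0;r_3-\rho_{\text{max}})$ (starting the plateau a distance $\rho_{\text{max}}$ inside $r_3$ is what will make the error region land correctly). Because $\psi$ vanishes on $B(v_0;r_2)\ni v_0$ and $\chi$ vanishes on $\widetilde{\mathcal{V}}\backslash\set{v_0}$, the product $g:=\psi\chi$ vanishes on all of $\widetilde{\mathcal{V}}$, so $\innprod{L\chi}{L(\psi\chi)}_{\ell_2}=0$. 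Writing the discrete Leibniz rule $L(\psi\chi)=\psi\,L\chi+R$ with commutator $R(u)=\sum_{v\sim u}L_{uv}\bigl(\psi(v)-\psi(u)\bigr)\chi(v)$, this becomes $\sum_u\psi(u)\,(L\chi(u))^2=-\innprod{L\chi}{R}_{\ell_2}$. The left side dominates $\abs{\chi}_{H_2^2(\mathcal{G}\backslash B(v_0;r_3-\rho_{\text{max}}))}^2$, hence $\abs{\chi}_{H_2^2(\mathcal{G}\backslash B(v_0;r_4))}^2$, while $R$ is supported within one edge ($\leq\rho_{\text{max}}$) of the transition annulus and, using the boundedness of the entries of the normalized Laplacian, satisfies $\norm{R}_{\ell_2}\lesssim\tfrac{\rho_{\text{max}}}{r_3-r_2}\norm{\chi}_{\ell_2(A')}$ on an annular neighborhood $A'$ of the transition.

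The next step converts the $\ell_2$ bound on $\chi$ into a Sobolev semi-norm via \cref{as:zero_lem}. Applying it with $\mathcal{V}_1=A'$ gives $\norm{\chi}_{\ell_2(A')}^2\leq C\,\abs{\chi}_{H_2^2(\bar{A'})}^2$, which is legitimate because the $2h$-fattening $\bar{A'}$ stays clear of $v_0$ (as $r_2>3\rho_{\text{max}}+2h$), so $\chi$ vanishes on $\widetilde{\mathcal{V}}\cap\bar{A'}$ and the local zeros estimate applies. The whole role of the radii $r_1=r_2-2\rho_{\text{max}}-2h$ and $r_4=r_3+2h$ is bookkeeping: the reach of $L$ contributes collars of width $\rho_{\text{max}}$ and the zeros lemma a fattening of width $2h$, and these radii are calibrated so that both $\mathrm{supp}\,R$ and $\bar{A'}$ lie inside the annulus $A(v_0;r_1,r_4)$. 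Combining the three estimates yields $\abs{\chi}_{H_2^2(\mathcal{G}\backslash B(v_0;r_4))}^2\leq K\,\abs{\chi}_{H_2^2(A(v_0;r_1,r_4))}^2$ for an explicit $K$ depending only on $C$, $M$, $\rho_{\text{max}}$, and $r_3-r_2$. Finally I would fill the hole: since $\mathcal{G}\backslash B(v_0;r_1)$ is the disjoint union of $\mathcal{G}\backslash B(v_0;r_4)$ and $A(v_0;r_1,r_4)$, writing $E_{\mathrm{out}}=\abs{\chi}_{H_2^2(\mathcal{G}\backslash B(v_0;r_4))}^2$ and $E_{\mathrm{big}}=\abs{\chi}_{H_2^2(\mathcal{G}\backslash B(v_0;r_1))}^2$, the bound reads $E_{\mathrm{out}}\leq K(E_{\mathrm{big}}-E_{\mathrm{out}})$, i.e.\ $E_{\mathrm{out}}\leq\tfrac{K}{1+K}E_{\mathrm{big}}$, giving the claim with $\mu=\sqrt{K/(1+K)}<1$ for every admissible pair $r_2<r_3$.

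The main obstacle is the middle step: controlling the commutator $R$ and, simultaneously, performing the radius bookkeeping that guarantees $\mathrm{supp}\,R$ and the fattened set $\bar{A'}$ sit inside the half-open annulus $A(v_0;r_1,r_4)$ and are therefore disjoint from the exterior region $\mathcal{G}\backslash B(v_0;r_4)$ carrying the left-hand energy. This disjointness is exactly what powers the hole-filling and forces $\mu<1$ with no largeness hypothesis on $r_3-r_2$; were the error region allowed to bleed into the exterior, one would only recover $E_{\mathrm{out}}\leq K\,E_{\mathrm{big}}$, which is useless unless $K<1$. Forcing the outer boundary of $\bar{A'}$ to land at precisely $r_4=r_3+2h$ is what dictates starting the plateau a full $\rho_{\text{max}}$ inside $r_3$, and verifying this — together with checking that \cref{as:zero_lem} may indeed be invoked for $\chi$ on regions excluding $v_0$ — is the delicate part of the argument.
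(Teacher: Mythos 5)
Your argument is correct in substance and has the same architecture as the paper's proof: a cutoff exploits the variational property of $\chi$, \cref{as:zero_lem} converts the resulting $\ell_2$ bound on an annulus back into Sobolev energy on the fattened annulus $A(v_0;r_1,r_4)$, and hole-filling delivers $\mu<1$. The one real difference is how the cutoff step is executed. The paper takes $\phi$ equal to $1$ on $B(v_0;r_2)$ and $0$ outside $B(v_0;r_3)$, compares $\abs{\chi}_{H_2^2(\mathcal{G})}^2$ with $\abs{\phi\chi}_{H_2^2(\mathcal{G})}^2$ directly via the minimal-norm property, and bounds the annular term by $\norm{L(\phi\chi)}_{\ell_2}\leq 2\norm{\phi\chi}_{\ell_2}$ using only $0\leq\phi\leq 1$ and the spectral bound $\norm{L}\leq 2$; no Leibniz rule, no Lipschitz constant, and a final constant $\mu=4C_0/(1+4C_0)$ that does not depend on $r_3-r_2$. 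You use the complementary cutoff, the orthogonality form of the variational property, and a commutator estimate with $\norm{R}_{\ell_2}\lesssim\rho_{\text{max}}(r_3-r_2)^{-1}\norm{\chi}_{\ell_2(A')}$. This works, but note two costs. First, your $K$, hence $\mu$, degrades as $r_3-r_2$ shrinks, and \cref{thm:decay_lagrange} invokes the lemma exactly at $r_3-r_2=\rho_{\text{max}}$ (the balls $B_k$ differ by $4h+3\rho_{\text{max}}=(r_3-r_2)+2\rho_{\text{max}}+4h$); you should therefore replace the Lipschitz bound by the crude bound $\abs{\psi(v)-\psi(u)}\leq 1$, which keeps the constant uniform. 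Second, requiring $\psi=0$ on $B(v_0;r_2)$ while the plateau begins at $r_3-\rho_{\text{max}}$ is contradictory when $r_3-r_2<\rho_{\text{max}}$, a case the hypotheses permit; taking $\psi$ to be the indicator of $\mathcal{G}\backslash B(v_0;r_2)$ fixes the definition, though the fattened error region then only fits inside $A(v_0;r_1,r_4)$ when $r_3-r_2\geq\rho_{\text{max}}$, so that regime of the lemma is not fully covered by your bookkeeping (the paper's version of the cutoff avoids this). One point in your favor: you explicitly justify applying \cref{as:zero_lem} to $\chi$ --- which is \emph{not} zero at $v_0\in\widetilde{\mathcal{V}}$ --- by checking that the fattened annulus avoids $v_0$; the paper applies the assumption without comment on this point.
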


\begin{proof}
Our proof is similar to that of 
\cite{hangelbroek10,hangelbroek12plc}. In this proof, all balls and annuli are centered at $v_{0}$. We simplify our notation by omitting the center: for example we write $B(r_0)$ rather than $B(v_{0};r_0)$.

Let $\phi$ be a function satisfying
\begin{itemize}
\item $0\leq \phi(v)\leq 1$
\item $\phi(v)=1$ for $v\in B(r_2)$
\item $\phi(v)=0$ for $v\notin B(r_3)$
\end{itemize}

Recall the interpolation criterion for $\chi$
\begin{equation}
	\chi(v, v_{0})
	=
	\begin{cases}
	1, & v = v_{0}\\
	0, & v\in \widetilde{\mathcal{V}}\backslash \set{v_{0}}
	\end{cases},
\end{equation}
and notice that the product $\phi\chi$ satisfies the same interpolation conditions. Hence the variational property of $\chi$ implies
\begin{align}
\begin{split}
	\abs{\chi}_{H_2^{2}(\mathcal{G})}^2
	&\leq
	\abs{\phi \chi}_{H_2^{2}(\mathcal{G})}^2 \\
	&=
	\abs{ \chi}_{H_2^{2}(B(r_2-\rho_{\text{max}}))}^2
	+
	\abs{\phi \chi}_{H_2^{2}(A(r_2-\rho_{\text{max}},r_3))}^2.
\end{split}
\end{align}
Using the estimate above, we have 
\begin{align}
\begin{split}
	\abs{\chi}_{H_2^{2}(\mathcal{G} \backslash B(r_2))}^2
	&=
	\abs{ \chi}_{H_2^{2}(\mathcal{G})}^2
	-
	\abs{ \chi}_{H_2^{2}(B(r_2))}^2\\
	&\leq
	\parenth{
	\abs{ \chi}_{H_2^{2}(B(r_2-\rho_{\text{max}}))}^2
	+
	\abs{\phi \chi}_{H_2^{2}(A(r_2-\rho_{\text{max}},r_3))}^2
	}
	-
	\abs{ \chi}_{H_2^{2}(B(r_2))}^2.
\end{split}
\end{align}
Simplifying the expression gives

\begin{align}
\begin{split}
	\abs{\chi}_{H_2^{2}(\mathcal{G} \backslash B(r_2))}^2
	&\leq
	\parenth{
	\abs{ \chi}_{H_2^{2}(B(r_2-\rho_{\text{max}}))}^2
	-
	\abs{ \chi}_{H_2^{2}(B(r_2))}^2
	}
	+
	\abs{\phi \chi}_{H_2^{2}(A(r_2-\rho_{\text{max}},r_3))}^2\\
	&\leq 
	\abs{\phi \chi}_{H_2^{2}(A(r_2-\rho_{\text{max}},r_3))}^2
\end{split}
\end{align}
Using the definition of the semi-norm and properties of $L$,
\begin{align}
\begin{split}
	\abs{\chi}_{H_2^{2}(\mathcal{G} \backslash B(r_2))}^2
	&\leq
	\norm{L \parenth{\phi\chi} }_{\ell_2(A(r_2-\rho_{\text{max}},r_3))}^2 \\
	&\leq
	4
	\norm{\phi\chi}_{\ell_2(A(r_2-2\rho_{\text{max}},r_3))} ^2 \\
	&\leq 
	4 \norm{\chi}_{\ell_2(A(r_2-2\rho_{\text{max}},r_3))} ^2.
\end{split}
\end{align}
Now we apply \cref{as:zero_lem} with 
$f=\chi$, 
$\mathcal{V}_1 = A(r_2-2\rho_{\text{max}},r_3) $,
and 
$\bar{\mathcal{V}}_1 = A(r_2-2\rho_{\text{max}}-2h,r_3+2h)=A(r_1,r_4)$. 
This implies that there is a constant $C_0$ such that
\begin{align}
\begin{split}
	\abs{\chi}_{H_2^{2}(\mathcal{G} \backslash B(r_4))}^2
	&\leq
	\abs{\chi}_{H_2^{2}(\mathcal{G} \backslash B(r_2))}^2\\
	&\leq
	4 C_0 \abs{\chi}_{H_2^{2}(A(r_1,r_4))}^2.
\end{split}
\end{align}

Writing the annulus as a set difference
\begin{align}
	\abs{\chi}_{H_2^{2}(\mathcal{G} \backslash B(r_4))}^2
	&\leq
	4C_0
	\parenth{
	\abs{ \chi}_{H_2^{2}(\mathcal{G}\backslash B(r_1))}^2
	-
	\abs{ \chi}_{H_2^{2}(\mathcal{G}\backslash B(r_4))}^2
	},
\end{align}
which implies
\begin{equation}
	(1+4C_0)\abs{ \chi}_{H_2^{2}(\mathcal{G}\backslash B(r_4))}^2
	\leq
	4C_0 \abs{ \chi}_{H_2^{2}(\mathcal{G}\backslash B(r_1))}^2.
\end{equation}
This is equivalent to
\begin{equation}
	\abs{ \chi}_{H_2^{2}(\mathcal{G}\backslash B(r_4))}^2
	\leq
	\frac{4C_0}{1+4C_0} 
	\abs{ \chi}_{H_2^{2}(\mathcal{G}\backslash B(r_1))}^2,
\end{equation}
so $\mu=4C_0/(1+4C_0)$.
\end{proof}

The significance of $\mu$ in the previous lemma and following theorem is that it specifies the exponential decay rate of Lagrange functions: smaller values of $\mu$ imply faster decay. A similar constant appears in the case of a continuous domain \cite{hangelbroek12prk,matveev92}.

\begin{theorem}\label{thm:decay_lagrange}
Under the assumptions of \cref{lem:bulk_1}, the Lagrange function $\chi(\cdot,v_{0})$ satisfies exponential decay away from its center. In particular,  
there are constants $C,T>0$ such that 
\begin{equation}
	\abs{\chi(v_{1},v_{0})} 
	\leq
	C
	\mu^{T \rho(v_{1},v_{0})}\abs{\chi(\cdot,v_{0})}_{H_2^2(\mathcal{G})},
\end{equation}
where $\mu<1$ comes from \cref{lem:bulk_1}.
\end{theorem}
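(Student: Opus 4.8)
The plan is to prove the theorem in two stages: first iterate the bulk-chasing estimate of \cref{lem:bulk_1} to show that the tail semi-norm $\abs{\chi}_{H_2^2(\mathcal{G}\backslash B(r))}$ decays geometrically as the radius $r$ grows, and then convert this decay of a global (semi-norm) quantity into pointwise decay of $\abs{\chi(v_1,v_0)}$ by a local sampling inequality. Throughout, all balls written $B(r)$ are centered at $v_0$ (as in the proof of \cref{lem:bulk_1}), and I write $\rho_1 := \rho(v_1,v_0)$.

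For the first stage, I would feed the output radius of one application of \cref{lem:bulk_1} into the input of the next. Given a target inner radius $R_n$, applying the lemma with $r_1 = R_n$ forces $r_2 = R_n + 2\rho_{\text{max}} + 2h$ and leaves $r_3$ free, producing the next radius $R_{n+1} = r_4 = r_3 + 2h$. The admissibility constraints $3\rho_{\text{max}} + 2h < r_2 < r_3$ hold provided $R_n > \rho_{\text{max}}$ and $R_{n+1} > R_n + 2\rho_{\text{max}} + 4h$, so I fix a single increment $\delta > 2\rho_{\text{max}} + 4h$ and set $R_n := R_0 + n\delta$ with $R_0 > \rho_{\text{max}}$. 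Each step contributes one factor of $\mu<1$, so by induction
\begin{equation}
	\abs{\chi}_{H_2^2(\mathcal{G}\backslash B(R_n))}
	\leq
	\mu^n \abs{\chi}_{H_2^2(\mathcal{G}\backslash B(R_0))}
	\leq
	\mu^n \abs{\chi}_{H_2^2(\mathcal{G})},
\end{equation}
which is geometric decay of the tail semi-norm in $n$, hence in the radius $R_n$.

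For the second stage, fix a vertex $v_1$ with $\rho_1 > 2h$, so that the collar $B(v_1;2h)$ does not contain $v_0$ and $\chi$ therefore vanishes on every vertex of $\widetilde{\mathcal{V}}$ meeting this collar. Applying \cref{as:zero_lem} with $\mathcal{V}_1 = \set{v_1}$ (so $\bar{\mathcal{V}}_1 = B(v_1;2h)$) gives $\abs{\chi(v_1,v_0)}^2 = \norm{\chi}_{\ell_2(\set{v_1})}^2 \leq C_0 \abs{\chi}_{H_2^2(B(v_1;2h))}^2$. The triangle inequality yields $B(v_1;2h) \subseteq \mathcal{G}\backslash B(\rho_1 - 2h)$, so the right-hand side is at most $C_0 \abs{\chi}_{H_2^2(\mathcal{G}\backslash B(\rho_1 - 2h))}^2$. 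Choosing $n = \lfloor (\rho_1 - 2h - R_0)/\delta \rfloor$ to be the largest integer with $R_n \leq \rho_1 - 2h$ and combining with the first stage gives $\abs{\chi(v_1,v_0)} \leq \sqrt{C_0}\,\mu^{n}\abs{\chi}_{H_2^2(\mathcal{G})}$.

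It remains to reindex. Since $n \geq (\rho_1 - 2h - R_0)/\delta - 1$ and $\mu<1$, the factor $\mu^n$ is at most a fixed constant (depending only on $R_0,h,\delta$) times $\mu^{\rho_1/\delta}$; setting $T := 1/\delta$ and absorbing all constants together with $\sqrt{C_0}$ into a single $C$ yields $\abs{\chi(v_1,v_0)} \leq C\mu^{T\rho_1}\abs{\chi(\cdot,v_0)}_{H_2^2(\mathcal{G})}$ for every $v_1$ with $\rho_1 \geq R_0 + 2h$. For the finitely many vertices with $\rho_1 < R_0 + 2h$ the estimate holds after enlarging $C$, using that $\chi$ is nonconstant so $\abs{\chi}_{H_2^2(\mathcal{G})}>0$. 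I expect the main obstacle to be the second stage: \cref{as:zero_lem} is stated for functions vanishing on all of $\widetilde{\mathcal{V}}$, whereas $\chi(v_0,v_0)=1$, so I must argue---exactly as in \cref{lem:bulk_1}---that the inequality is genuinely local and applies on the collar $B(v_1;2h)$ precisely because $v_0$ lies outside it. Keeping careful track of which balls exclude the center, and verifying the containment and boundary conventions relating $B(v_1;2h)$ to the tail region $\mathcal{G}\backslash B(\rho_1-2h)$, is where the argument must be handled with care.
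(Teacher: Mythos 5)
Your proposal is correct and follows essentially the same route as the paper's proof: iterate \cref{lem:bulk_1} along a sequence of concentric balls whose radii grow by a fixed increment (the paper uses $4h+3\rho_{\text{max}}$ where you use a free $\delta>2\rho_{\text{max}}+4h$), then apply \cref{as:zero_lem} locally on the collar $B(v_1;2h)$ to pass from the tail semi-norm to the pointwise value, and finally reindex the iteration count in terms of $\rho(v_1,v_0)$. The one subtlety you flag---that \cref{as:zero_lem} must be read as a local statement since $\chi$ does not vanish at $v_0$---is exactly the reading the paper relies on in both \cref{lem:bulk_1} and the theorem itself.
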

\begin{proof}
Let $v_{0}\in \widetilde{\mathcal{V}}$ and $v_{1}\in \mathcal{V}$ such that
$\rho(v_{0}, v_{1})$ satisfies 
\begin{equation}
M(4h+3\rho_{\text{max}})+2h+2\rho_{\text{max}}
\leq
\rho(v_{0}, v_{1})
\leq
M(4h+3\rho_{\text{max}})+2h+3\rho_{\text{max}}
\end{equation}
for some positive integer $M$.
We now define a sequence of balls, centered at $v_{0}$, whose radii are shrinking by an amount prescribed by \cref{lem:bulk_1}
\begin{equation}
B_k 
:=
B(v_{0},(M-k)(4h+3\rho_{\text{max}})+2h+2\rho_{\text{max}})
\end{equation}
for $k=0,\dots,M.$

Then
\begin{align}
\begin{split}
\abs{\chi(v_{1},v_{0})} 
&\leq
C_0^{1/2}\abs{\chi(\cdot,v_{0})}_{H_2^2(B(v_{1},2h))}\\
&\leq
C_0^{1/2}\abs{\chi(\cdot,v_{0})}_{H_2^2(\mathcal{G}\backslash B_0)}
\end{split}
\end{align}
where $C_0$ comes from \cref{as:zero_lem}.

Applying \cref{lem:bulk_1}
\begin{align}\label{eq:decay_precise}
\begin{split}
\abs{\chi(v_{1},v_{0})} 
&\leq
C_0^{1/2}\mu \abs{\chi(\cdot,v_{0})}_{H_2^2(\mathcal{G}\backslash B_1)}\\
&\leq
C_0^{1/2}\mu^M \abs{\chi(\cdot,v_{0})}_{H_2^2(\mathcal{G}\backslash B_M)}\\
&\leq
C_0^{1/2}\mu^{\frac{\rho(v_{1},v_{0})-2h-3\rho_{\text{max}}}{4h+3\rho_{\text{max}}}} 
\abs{\chi(\cdot,v_{0})}_{H_2^2(\mathcal{G})} \\
&=
\parenth{C_0^{1/2}\mu^{-\frac{2h+3\rho_{\text{max}}}{4h+3\rho_{\text{max}}}}}
\mu^{(4h+3\rho_{\text{max}})^{-1} \rho(v_{1},v_{0}) }
\abs{\chi(\cdot,v_{0})}_{H_2^2(\mathcal{G})},
\end{split}
\end{align}
so in particular,  
$C=C_0^{1/2}\mu^{-\frac{2h+3\rho_{\text{max}}}{4h+3\rho_{\text{max}}}}$
and
$T=(4h+3\rho_{\text{max}})^{-1}$.
\end{proof}

Let us point out that, in addition to exponential decay, we have shown more precisely how the Lagrange functions decay in terms of the fill distance $h$ and the maximum distance between points $\rho_{\text{max}}$. This can be seen in \eqref{eq:decay_precise}. 
As $h$ and $\rho_{\text{max}}$ decrease, the basis functions become more localized. 
This is analogous to B-splines on euclidean spaces that scale with the density of the sampling grid.

On infinite graphs, the Moore-Penrose pseudo-inverse of the Laplacian is known to satisfy exponential decay \cite[Theorem 5.1]{sun07}, and exponential decay of our Lagrange functions could be derived accordingly in that setting. However, we would not be able to obtain the more precise decay of \cref{thm:decay_lagrange}. In fact, it is clear from the examples in the next section that the Lagrange functions on graphs are much more localized than the polyharmonic functions, just as in the case of polyharmonic functions on continuous domains. 

As a corollary of our theorem, we can deduce the exponential decay of the coefficients of the Lagrange functions.  This follows from the next proposition that relates the coefficients to a native space semi-inner product between two Lagrange functions.  

\begin{proposition}\label{prop:coeff_innprod}
Let $v_{0},v_{1}\in\widetilde{\mathcal{V}}\subset\mathcal{V}$ and consider the Lagrange functions associated with $L^{\alpha}$ for $\alpha>0$
\begin{align}
\begin{split}
\chi(\cdot,v_{0})
&=
C_0 \Lambda_0
+
\sum_{v\in\widetilde{\mathcal{V}}} 
\gamma_v \Phi_{\alpha}(\cdot,v)\\
\chi(\cdot,v_{1})
&=
C_1 \Lambda_0
+
\sum_{v\in\widetilde{\mathcal{V}}} 
\beta_v \Phi_{\alpha}(\cdot,v)
\end{split}
\end{align}
Then we have
\begin{equation}
\innprod{\chi(\cdot,v_{0})}{\chi(\cdot,v_{1})}_{\mathcal{N}_{\alpha}} 
=
\gamma_{v_{1}}
=
\beta_{v_{0}}
\end{equation}
\end{proposition}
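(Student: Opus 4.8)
The plan is to exploit the reproducing-kernel structure of the native space $\mathcal{N}_{\alpha}$ together with the Lagrange interpolation property. First I would record the two facts that drive everything: the polyharmonic splines $\Phi_{\alpha}(\cdot,v)$ serve as (conditional) reproducing kernels, so that for a function $g$ orthogonal to $\Lambda_0$ in the appropriate sense one has $\innprod{g}{\Phi_{\alpha}(\cdot,v)}_{\mathcal{N}_{\alpha}} = g(v)$; and that each Lagrange function $\chi(\cdot,v_{1})$ satisfies the cardinal conditions $\chi(v,v_{1}) = \delta_{v,v_{1}}$ on $\widetilde{\mathcal{V}}$. The reproducing identity is exactly the computation already carried out in the proof of the variational proposition, where it was shown that $\innprod{g}{\sum_{v} \beta_v \Phi_{\alpha}(\cdot,v)}_{\mathcal{N}_{\alpha}} = \sum_v \beta_v g(v)$.

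The main computation is then a direct expansion. Using the representation $\chi(\cdot,v_{1}) = C_1\Lambda_0 + \sum_{v\in\widetilde{\mathcal{V}}} \beta_v \Phi_{\alpha}(\cdot,v)$, I would write
\begin{align}
\begin{split}
\innprod{\chi(\cdot,v_{0})}{\chi(\cdot,v_{1})}_{\mathcal{N}_{\alpha}}
&=
\innprod{\chi(\cdot,v_{0})}{C_1\Lambda_0}_{\mathcal{N}_{\alpha}}
+
\sum_{v\in\widetilde{\mathcal{V}}} \beta_v \innprod{\chi(\cdot,v_{0})}{\Phi_{\alpha}(\cdot,v)}_{\mathcal{N}_{\alpha}}\\
&=
\sum_{v\in\widetilde{\mathcal{V}}} \beta_v\, \chi(v,v_{0}).
\end{split}
\end{align}
The first term vanishes because $\Lambda_0$ is the null eigenvector of $L$, hence $L^{\alpha/2}\Lambda_0 = 0$ and the semi-inner product annihilates it. In the second term I would invoke the reproducing identity $\innprod{\chi(\cdot,v_{0})}{\Phi_{\alpha}(\cdot,v)}_{\mathcal{N}_{\alpha}} = \chi(v,v_{0})$. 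Finally, the Lagrange cardinal property collapses the sum: $\chi(v,v_{0}) = 1$ when $v = v_{0}$ and $0$ for all other $v\in\widetilde{\mathcal{V}}$, leaving exactly $\beta_{v_{0}}$. The identity $\innprod{\chi(\cdot,v_{0})}{\chi(\cdot,v_{1})}_{\mathcal{N}_{\alpha}} = \gamma_{v_{1}}$ follows symmetrically by expanding $\chi(\cdot,v_{0})$ instead and using that the semi-inner product is symmetric.

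The one point needing care — and the likely main obstacle — is justifying the reproducing identity $\innprod{g}{\Phi_{\alpha}(\cdot,v)}_{\mathcal{N}_{\alpha}} = g(v)$ in the \emph{conditionally} positive definite setting, where it only holds for $g$ lying in the correct subspace (those $g$ whose coefficient vector is orthogonal to $\Lambda_0$ restricted to $\widetilde{\mathcal{V}}$, as in \cref{prop:interpolant}). Since both $\chi(\cdot,v_{0})$ and $\chi(\cdot,v_{1})$ are themselves such admissible combinations, this is precisely the class covered by the computation in the variational proposition, so the identity applies without modification. I would therefore lean on that earlier calculation rather than re-proving it, noting only that $\Phi_{\alpha} = (L^{\dagger})^{\alpha}$ reproduces point evaluation against $L^{\alpha/2}$ because $L^{\alpha/2}(L^{\dagger})^{\alpha}L^{\alpha/2}$ acts as the orthogonal projection onto the range of $L$, which contains every admissible $g$. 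The rest is the bookkeeping of cancellation described above.
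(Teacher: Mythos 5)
Your proof is correct and follows essentially the same route as the paper's: the paper applies $L^{\alpha}$ to $\chi(\cdot,v_{0})$, expands it in the $\Phi_{\alpha}$ basis, uses $L^{\alpha}\Phi_{\alpha}(\cdot,v)=e_v$ together with the cardinal conditions to obtain $\gamma_{v_{1}}$, and then invokes symmetry for $\beta_{v_{0}}$ --- you simply expand the other factor first. One caution: the pointwise identity $\innprod{\chi(\cdot,v_{0})}{\Phi_{\alpha}(\cdot,v)}_{\mathcal{N}_{\alpha}}=\chi(v,v_{0})$ actually carries a correction term $-C_0\Lambda_0(v)$, because $\chi(\cdot,v_{0})$ is not orthogonal to $\Lambda_0$ (and hence not in the range of $L$) unless $C_0=0$; this term only disappears after summing against the coefficients via the side condition $\sum_{v\in\widetilde{\mathcal{V}}}\beta_v\Lambda_0(v)=0$, which is the orthogonality you already invoke, so the argument stands.
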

\begin{proof}
This follows from the definition of the native space norm and the symmetry of the Laplacian.
\begin{align}
\begin{split}
\innprod{\chi(\cdot,v_{0})}{\chi(\cdot,v_{1})}_{\mathcal{N}_{\alpha}} 
&=
\innprod{L^{\alpha}\chi(\cdot,v_{0})}{\chi(\cdot,v_{1})}_{\ell_2(\mathcal{G})} \\
&=
\innprod{L^{\alpha}\parenth{C_0\Lambda_0+\sum_{v\in\widetilde{\mathcal{V}}} 
\gamma_v \Phi_{\alpha}(\cdot,v)}}{\chi(\cdot,v_{1})}_{\ell_2(\mathcal{G})} \\
&=
\innprod{\sum_{v\in\widetilde{\mathcal{V}}}
\gamma_v L^{\alpha} \Phi_{\alpha}(\cdot,v)}{\chi(\cdot,v_{1})}_{\ell_2(\mathcal{G})} \\
&=
\innprod{\sum_{v\in\widetilde{\mathcal{V}}}
\gamma_v e_v}{\chi(\cdot,v_{1})}_{\ell_2(\mathcal{G})} \\
&=
\sum_{v\in\widetilde{\mathcal{V}}}
\gamma_v
\innprod{ e_v}{\chi(\cdot,v_{1})}_{\ell_2(\mathcal{G})} \\
&=
\gamma_{v_{1}}
\end{split}
\end{align}
By symmetry, this is also equal to $\beta_{v_{0}}$.
\end{proof}

\begin{corollary}
The coefficients of the Lagrange functions associated with 
$(L^{\dagger})^2$ decay exponentially fast.
\end{corollary}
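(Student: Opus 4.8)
The plan is to convert the coefficient into a native space pairing and then import the semi-norm decay already produced by the bulk-chasing argument. By \cref{prop:coeff_innprod} with $\alpha=2$, the coefficient $\gamma_{v_1}$ of $\Phi_2(\cdot,v_1)$ in the expansion of $\chi(\cdot,v_0)$ equals $\innprod{\chi(\cdot,v_0)}{\chi(\cdot,v_1)}_{\mathcal{N}_2}$, and by the definition of the native space inner product (with $L^{\alpha/2}=L$ when $\alpha=2$),
\[
\gamma_{v_1}=\innprod{\chi(\cdot,v_0)}{\chi(\cdot,v_1)}_{\mathcal{N}_2}=\innprod{L\chi(\cdot,v_0)}{L\chi(\cdot,v_1)}_{\ell_2(\mathcal{G})}.
\]
Thus bounding the coefficient reduces to bounding this $\ell_2$ pairing, and the mechanism for making it small is that the two Lagrange functions are concentrated near their respective centers, which are far apart. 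I set $R:=\rho(v_0,v_1)$ and $B:=B(v_0;R/2)$.

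Since the $\ell_2$ inner product is a sum over vertices, I would split it cleanly over $B$ and $\mathcal{G}\setminus B$, apply Cauchy--Schwarz on each piece, and then use that $\norm{Lf}_{\ell_2(\mathcal{W})}=\abs{f}_{H_2^2(\mathcal{W})}$ is exactly the semi-norm restricted to $\mathcal{W}$. This gives
\[
\abs{\gamma_{v_1}}\leq \abs{\chi(\cdot,v_0)}_{H_2^2(B)}\,\abs{\chi(\cdot,v_1)}_{H_2^2(B)}+\abs{\chi(\cdot,v_0)}_{H_2^2(\mathcal{G}\setminus B)}\,\abs{\chi(\cdot,v_1)}_{H_2^2(\mathcal{G}\setminus B)}.
\]
In each product exactly one factor is ``far'' from its own center: for the first product the triangle inequality gives $B(v_0;R/2)\subseteq \mathcal{G}\setminus B(v_1;R/2)$, so $\abs{\chi(\cdot,v_1)}_{H_2^2(B)}\leq \abs{\chi(\cdot,v_1)}_{H_2^2(\mathcal{G}\setminus B(v_1;R/2))}$; for the second product $\abs{\chi(\cdot,v_0)}_{H_2^2(\mathcal{G}\setminus B)}$ is already a semi-norm on the complement of a ball about $v_0$. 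The other factor in each product is controlled by the full semi-norm $\abs{\chi(\cdot,v)}_{H_2^2(\mathcal{G})}$.

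Both ``far'' factors are precisely what the iterated \cref{lem:bulk_1} controls: the chain of inequalities leading to \eqref{eq:decay_precise} shows that $\abs{\chi(\cdot,v)}_{H_2^2(\mathcal{G}\setminus B(v;r))}$ decays geometrically in the radius $r$, with rate governed by $\mu<1$ and step size $4h+3\rho_{\text{max}}$. Taking $r=R/2$ and recombining the two products yields a bound of the form
\[
\abs{\gamma_{v_1}}\leq C\,\mu^{R/(2(4h+3\rho_{\text{max}}))}\,\abs{\chi(\cdot,v_0)}_{H_2^2(\mathcal{G})}\,\abs{\chi(\cdot,v_1)}_{H_2^2(\mathcal{G})},
\]
which is exponential decay in $R=\rho(v_0,v_1)$, and by the symmetry noted in \cref{prop:coeff_innprod} the identical bound holds for $\beta_{v_0}$.

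The main obstacle I anticipate is bookkeeping rather than anything conceptual. I must make the ball-containment precise under the paper's closed-ball convention and track the $\rho_{\text{max}}$ and $h$ offsets so that the radius fed into \cref{lem:bulk_1} is admissible (it must exceed $3\rho_{\text{max}}+2h$), absorbing the short-range regime into the constant $C$. The one genuinely substantive point is to argue that the full semi-norms $\abs{\chi(\cdot,v)}_{H_2^2(\mathcal{G})}$ are bounded uniformly in the center, so that the estimate is a clean exponential in $\rho(v_0,v_1)$ with center-independent constants; I expect this to follow from the variational minimality of $\chi(\cdot,v)$ by comparing against a fixed, suitably localized interpolant of the Lagrange data, using \cref{as:zero_lem}.
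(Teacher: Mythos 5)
Your proposal is correct and follows essentially the same route as the paper: reduce the coefficient to the native-space pairing via \cref{prop:coeff_innprod}, split the graph at roughly half the distance between the two centers, apply Cauchy--Schwarz on each piece, and control whichever factor is far from its own center by the iterated bulk-chasing estimate of \cref{lem:bulk_1}. Your partition into $B(v_0;R/2)$ and its complement is in fact the cleaner form of the paper's $X_0$/$X_1$ decomposition, and the bookkeeping concerns you flag (closed-ball boundaries, uniformity of $\abs{\chi(\cdot,v)}_{H_2^2(\mathcal{G})}$ in the center) are points the paper simply absorbs into the constant $C$ without further comment.
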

\begin{proof}
Let $v_{0},v_{1}\in\widetilde{\mathcal{V}}$. Considering  \cref{prop:coeff_innprod},  to show exponential decay of the coefficients, it suffices to show the exponential decay of the semi-inner product 
$\innprod{\chi(\cdot,v_{0})}{\chi(\cdot,v_{1})}_{\mathcal{N}_{2}}$. Now, suppose
\begin{equation}
2M(4h+3\rho_{\text{max}})+2h+2\rho_{\text{max}}
\leq
\rho(v_{0}, v_{1})
\leq
2M(4h+3\rho_{\text{max}})+2h+3\rho_{\text{max}}
\end{equation}

Consider decomposing the graph into the following sets based on the distance\\ $r = M(4h+3\rho_{\text{max}})+2h+2\rho_{\text{max}}$.
We define the sets
\begin{align}
\begin{split}
X_0
&=
\setb{v\in\mathcal{V}}{\rho(v,v_{0})\leq r }\\
X_1
&=
\setb{v\in\mathcal{V}}{\rho(v,v_{1})> r }
\end{split}
\end{align}
Then
\begin{align}
\begin{split}
\innprod{\chi(\cdot,v_{0})}{\chi(\cdot,v_{1})}_{\mathcal{N}_{2}(\mathcal{G})}
&=
\innprod{\chi(\cdot,v_{0})}{\chi(\cdot,v_{1})}_{\mathcal{N}_{2}(X_0)}
+
\innprod{\chi(\cdot,v_{0})}{\chi(\cdot,v_{1})}_{\mathcal{N}_{2}(X_1)}\\
&\leq
\abs{\chi(\cdot,v_{0})}_{H_2^2(X_0)}\abs{\chi(\cdot,v_{1})}_{H_2^2(X_0)}
+
\abs{\chi(\cdot,v_{0})}_{H_2^2(X_1)}\abs{\chi(\cdot,v_{1})}_{H_2^2(X_1)}.
\end{split}
\end{align}
Let $\mu_0,\mu_1<1$ be the constants corresponding to the decay of
$\chi(\cdot,v_{0})$ and $\chi(\cdot,v_{1})$ respectively. Also, define $\mu=\max\{\mu_0,\mu_1\}$. Then 
\begin{align}
\begin{split}
\innprod{\chi(\cdot,v_{0})}{\chi(\cdot,v_{1})}_{\mathcal{N}_{2}(\mathcal{G})}
&\leq
\abs{\chi(\cdot,v_{0})}_{H_2^2(X_0)}\mu_1^M
+
\mu_0^M\abs{\chi(\cdot,v_{1})}_{H_2^2(X_1)}\\
&\leq
C \mu^{\frac{\rho(v_{1},v_{0})-2h-3\rho_{\text{max}}}{4h+3\rho_{\text{max}}}} 
\end{split}
\end{align}
\end{proof}

\section{Local Lagrange functions}
\label{sec:loc_lag}

Given the exponential decay of the Lagrange coefficients, we can truncate the expansion to form truncated Lagrange functions that closely approximate the full Lagrange functions. While these functions have a simpler form than the Lagrange functions, they require the same amount of computation to construct. The purpose of the truncated Lagrange functions is to provide a theoretical link between the Lagrange and local Lagrange functions defined below. Ultimately, we intend to use the more computationally efficient local Lagrange functions. 

The Lagrange function $\chi(\cdot, v_{0})$ centered at $v_{0}\in\widetilde{\mathcal{V}}$ is a linear combination  of polyharmonic splines
\begin{equation}
	\chi(\cdot, v_{0})
	=
	C\Lambda_0
	+
	\sum_{v\in \mathcal{V}} \beta_{v} \Phi_{\alpha}(\cdot, v).
\end{equation}
We form the truncated Lagrange function 
$\widetilde{\chi}(\cdot, v_{0})$
 in terms of a distance $K>0$.

 In particular, we define a neighborhood $B(v_{0},K)$ and consider
\begin{equation}
	\widetilde{\chi}(\cdot, v_{0})
	=
	C\Lambda_0
	+
	\sum_{v\in B(v_{n_0},K)\bigcap \widetilde{\mathcal{V}}} 
	\widetilde{\beta}_v
	\Phi_{\alpha}(\cdot, v).
\end{equation}
The coefficients are modified to ensure that they are orthogonal to $\Lambda_0$ restricted to the neighborhood.

The local Lagrange functions $\bar{\chi}(\cdot,v_{0})$ are constructed analogously to the Lagrange functions; however, they only use basis functions centered at points of $B(v_{0},K)$ and the interpolation conditions are only enforced on this ball.

\begin{definition} 
Given a neighborhood $B(v_{0},K)$ of $v_{0}\in\widetilde{\mathcal{V}}\subset\mathcal{V}$ with $K>0$,  we define the local Lagrange function
\begin{equation}
	\bar{\chi}(\cdot, v_{0})
	=
	\bar{C}\Lambda_0
	+
	\sum_{v\in B(v_{0},K) \bigcap \widetilde{\mathcal{V}}} 
	\bar{\beta}_{v} 
	\Phi_{\alpha}(\cdot, v),
\end{equation}
where 
\begin{equation}
\bar{\chi}(v, v_{0}) 
=
\begin{cases}
1, v = v_{0}\\
0, v \in B(v_{0},K)\bigcap \widetilde{\mathcal{V}} \backslash  \set{v_{0}}
\end{cases}
\end{equation}
and the vector of coefficients $\bar{\beta}_{v}$ is orthogonal to $\Lambda_0$ restricted to the neighborhood.
\end{definition}

\begin{conjecture}
The local Lagrange functions can be made arbitrarily close to the full Lagrange functions with a relatively small $K$ value. 
\end{conjecture}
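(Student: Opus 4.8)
The plan is to establish a quantitative form of the conjecture: that there are constants $C>0$ and $c>0$, independent of the size of $\mathcal{G}$, such that
\begin{equation}
\abs{\chi(\cdot,v_{0})-\bar{\chi}(\cdot,v_{0})}_{H_2^{2}(\mathcal{G})}
\leq
C\mu^{cK},
\end{equation}
where $\mu<1$ is the rate from \cref{thm:decay_lagrange}, so that ``arbitrarily close'' is realized through exponential convergence as the truncation radius $K$ grows. I would prove this in two stages, using the truncated Lagrange function $\widetilde{\chi}(\cdot,v_{0})$ of \cref{sec:loc_lag} as an intermediary and then invoking the triangle inequality $\abs{\chi-\bar{\chi}}_{H_2^2(\mathcal{G})}\leq\abs{\chi-\widetilde{\chi}}_{H_2^2(\mathcal{G})}+\abs{\widetilde{\chi}-\bar{\chi}}_{H_2^2(\mathcal{G})}$.

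The first stage, bounding $\abs{\chi-\widetilde{\chi}}_{H_2^2(\mathcal{G})}$, I expect to be routine given the corollary on exponential decay of the coefficients. Up to a small reorthogonalization of the coefficients against $\Lambda_0$ restricted to the neighborhood, the difference is the discarded tail $\sum_{v\notin B(v_{0},K)}\beta_{v}\Phi_{\alpha}(\cdot,v)$. A direct computation using \cref{prop:coeff_innprod} shows that the native-space Gram matrix of the splines is exactly the interpolation matrix $\bigl[\Phi_{\alpha}(u,v)\bigr]$; since the coefficients $\beta_{v}$ decay like $\mu^{T\rho(v,v_{0})}$ and the entries $\Phi_{\alpha}(u,v)$ are bounded (indeed themselves decaying, by \cite{sun07}), the native-space norm of the tail is dominated by a geometric sum over $\mathcal{V}\backslash B(v_{0},K)$ and is therefore $O(\mu^{cK})$ for some $c>0$.

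The second stage, bounding $\abs{\widetilde{\chi}-\bar{\chi}}_{H_2^2(\mathcal{G})}$, is the substantive one. Both functions lie in the local space $V_{K}=\mathrm{span}\bigl(\set{\Lambda_0}\cup\setb{\Phi_{\alpha}(\cdot,v)}{v\in B(v_{0},K)\cap\widetilde{\mathcal{V}}}\bigr)$, and $\bar{\chi}$ is, by its variational characterization, the minimal-native-norm element of $V_{K}$ matching the Lagrange data on the local nodes $S:=B(v_{0},K)\cap\widetilde{\mathcal{V}}$. The truncated function $\widetilde{\chi}$ also lies in $V_{K}$ but satisfies these nodal conditions only approximately: because $\chi$ interpolates exactly on all of $\widetilde{\mathcal{V}}$, the residual $\widetilde{\chi}(u)-\delta_{u,v_{0}}$ for $u\in S$ equals (minus) the tail evaluated at $u$, which is again exponentially small in $K$. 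Writing $d:=\bar{\chi}-\widetilde{\chi}\in V_{K}$ with local coefficient vector $\gamma\perp\Lambda_0|_{S}$, the identity $\abs{d}_{H_2^2(\mathcal{G})}^2=\gamma^{\mathsf T}\bigl(d|_{S}\bigr)$, relating $\gamma$ and the nodal values $d|_{S}$ through the interpolation matrix $G=\bigl[\Phi_{\alpha}(u,v)\bigr]_{u,v\in S}$, gives $\abs{d}_{H_2^2(\mathcal{G})}^2\leq\lambda_{\min}^{-1}\,\norm{d|_{S}}_{\ell_2(S)}^2$, where $\lambda_{\min}$ is the least eigenvalue of $G$ on the orthogonal complement of $\Lambda_0|_{S}$. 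Thus the native norm of $d$ is controlled by the exponentially small nodal residual, provided $\lambda_{\min}$ is bounded away from zero.

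The main obstacle is exactly this last point: a lower bound on $\lambda_{\min}$ that is uniform in $K$ and in $\abs{S}$, rather than merely positive for each fixed graph. On Euclidean domains and manifolds this stability is the technical heart of the localized-basis theory, following from lower Riesz bounds for scattered polyharmonic kernels. On graphs I would try to derive it from the localization of the full Lagrange functions themselves: by \cref{prop:coeff_innprod} the full Lagrange coefficients form an approximate inverse of the interpolation matrix whose entries decay exponentially (the corollary), which places the matrix in the class of off-diagonally decaying matrices studied in \cite{sun07}, whose inverses inherit the same decay and hence admit norm bounds independent of the matrix size. Making such a bound genuinely independent of $K$ is the delicate step, and I anticipate it requires $\widetilde{\mathcal{V}}$ to be quasi-uniform (a bounded ratio of fill distance to separation) so that the number of local nodes and the decay rate are simultaneously controlled. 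Once the uniform lower bound on $\lambda_{\min}$ is in hand, combining the two stages yields the displayed estimate and shows that a moderate $K$ already forces $\bar{\chi}$ to agree with $\chi$ to high accuracy.
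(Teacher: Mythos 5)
The paper does not prove this statement: it is posed as a conjecture, supported only by the numerical evidence of \cref{fig:lagrange_decay_lat} and by analogy with the continuous-domain result of \cite{fuselier13}, so there is no proof of record to compare against. Taking your outline on its own terms: the two-stage decomposition through the truncated function $\widetilde{\chi}$ is the standard and sensible route, but as written it is a plan with two holes, and you have misjudged which one is serious. The obstacle you flag as the heart of the matter --- a lower bound on the least eigenvalue of the local Gram matrix $G=[\Phi_\alpha(u,v)]_{u,v\in S}$ on the orthogonal complement of $\Lambda_0|_S$, uniform in $K$ and $|S|$ --- is in fact easy on a graph. A coefficient vector $\gamma$ supported on $S$ with $\gamma\perp\Lambda_0|_S$ extends by zero to a vector orthogonal to $\Lambda_0$ on all of $\mathcal{G}$, so $\gamma^{\mathsf T}G\gamma=\sum_{n\geq 1}\lambda_n^{-\alpha}\abs{\innprod{\Lambda_n}{\gamma}}^2\geq\lambda_{N-1}^{-\alpha}\norm{\gamma}_{\ell_2}^2\geq 2^{-\alpha}\norm{\gamma}_{\ell_2}^2$, using the bound $\lambda_{N-1}\leq 2$ stated in \cref{sec:setting}. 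No quasi-uniformity and no appeal to \cite{sun07} is needed; this is a point where the discrete setting is genuinely easier than $\mathbb{R}^d$, and your second stage then closes with the uniform constant $2^{\alpha}$.

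The step you dismiss as routine is the one that needs care. In stage one you bound the native seminorm of the discarded tail by $\sum_{u,v\notin B(v_0,K)}\abs{\beta_u}\abs{\beta_v}\abs{\Phi_\alpha(u,v)}$ and assert that the entries $\Phi_\alpha(u,v)$ are bounded, ``indeed themselves decaying, by \cite{sun07}.'' That reference concerns infinite graphs; on a finite graph the Green's function need not be small --- on the $N$-cycle the entries of $L^{\dagger}$ grow with $N$ --- so in general the entries are controlled only by $\lambda_1^{-\alpha}$, which degrades with the spectral gap. In addition, the number of vertices at distance $r$ from $v_0$ can grow like $M^{r}$, which competes with the coefficient decay $\mu^{Tr}$ when $\mu$ is near $1$. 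Without a volume-growth or spectral-gap hypothesis, what your argument actually delivers is an error of the form $\mathrm{poly}(N)\,\mu^{cK}$, i.e., a truncation radius $K\gtrsim\log N$ rather than one independent of the graph --- arguably still ``relatively small,'' but weaker than the uniform statement you announce. Note also that the corollary you invoke bounds the coefficients $\beta_v=\innprod{\chi(\cdot,v_0)}{\chi(\cdot,v)}_{\mathcal{N}_2}$ only for $\alpha=2$ and only at interpolation nodes $v\in\widetilde{\mathcal{V}}$, so the whole argument should be stated in that setting. With these repairs your outline would yield a quantitative version of the conjecture and would be a genuine addition to the paper.
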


This conjecture has been established for basis functions on continuous domains \cite{fuselier13}, and we experimentally establish the connection in the next section. 

\section{Examples and simulations}
\label{sec:example}

\subsection{Decay properties}

The Green's function and Lagrange function for a cycle graph are shown in \cref{fig:lagrange_decay_circ}. The graph has 256 nodes, where every fourth vertex is an interpolation node. In \cref{fig:lagrange_decay_lat}, we compare the basis functions on a lattice graph. We see that the Lagrange functions exhibit much faster decay and are much better localized than the corresponding Green's functions. We also see that the local Lagrange is very close to the Lagrange function, especially near the center.

\begin{figure}
    \centering
    \begin{subfigure}[b]{0.32\textwidth}
        \includegraphics[width=\textwidth]{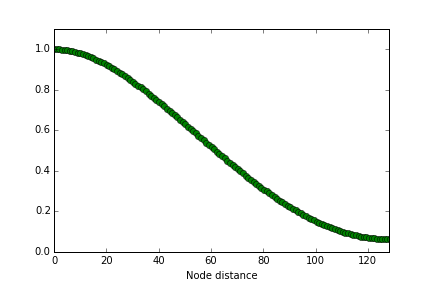}
    \end{subfigure}
    \begin{subfigure}[b]{0.32\textwidth}
        \includegraphics[width=\textwidth]{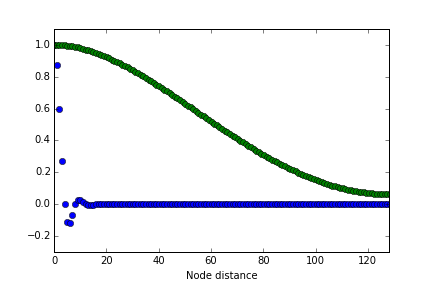}
    \end{subfigure}
    \begin{subfigure}[b]{0.32\textwidth}
        \includegraphics[width=\textwidth]{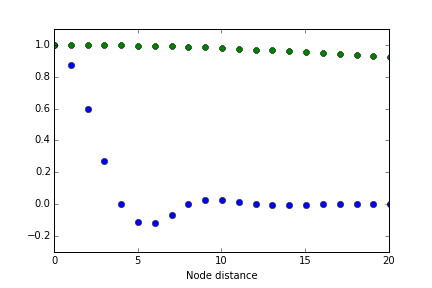}
    \end{subfigure}
    \caption{Comparison of the Green's functions with the Lagrange functions on the cycle graph with 256 nodes. All edges have weight and distance 1. Every fourth vertex is an interpolation node.  \textbf{Left:} Green's function. \textbf{Center:} Greens function vs Lagrange function. \textbf{Right:} Zoomed view of Greens function vs Lagrange function. }
    \label{fig:lagrange_decay_circ}
\end{figure}

\begin{figure}
    \centering
    \begin{subfigure}[b]{0.3\textwidth}
        \includegraphics[width=\textwidth]{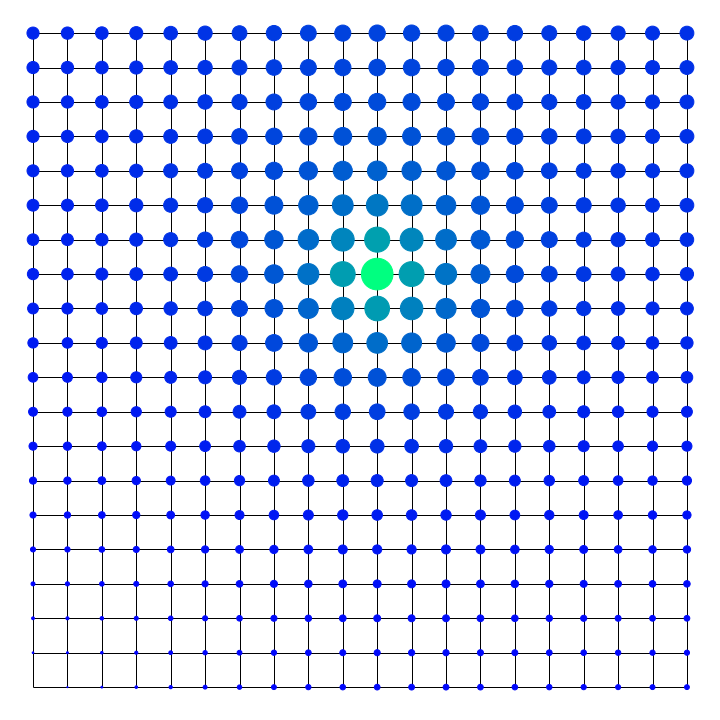}
    \end{subfigure}
    \begin{subfigure}[b]{0.3\textwidth}
        \includegraphics[width=\textwidth]{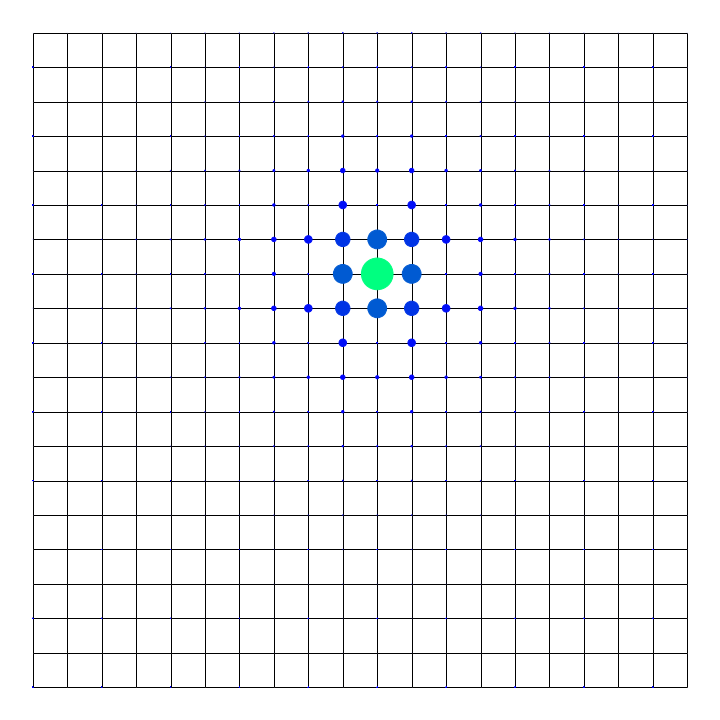}
    \end{subfigure}
    \begin{subfigure}[b]{0.3\textwidth}
        \includegraphics[width=\textwidth]{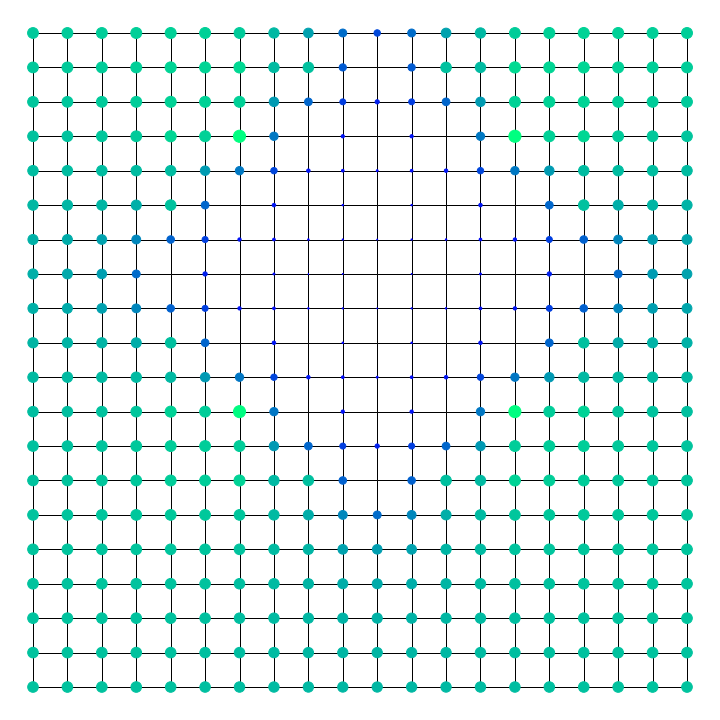}
    \end{subfigure}
    \caption{Comparison of the Green's functions with the Lagrange functions 
and local Lagrange functions 
on the lattice graph with 400 nodes. All edges have weight and distance 1.  The size of the node indicates the absolute value of the function at a vertex. 
\textbf{Left:} Green's function. 
\textbf{Center:} Lagrange function. 
\textbf{Right:} Difference between the Lagrange function and Local Lagrange using interpolation nodes within 6 units of the center. The values are magnified by a factor of 100.
}
    \label{fig:lagrange_decay_lat}
\end{figure}

\subsection{Experiment: Interpolating smooth data}

The purpose of this experiment is to show how the smoothness of the data affects the error of interpolation.
The data was generated as follows. We randomly construct 1000 data sites in the unit square $[0,1]\times[0,1]$, and we created a graph that connects these data sites. Then we define a smooth function that is a sum of uniform translates of a bump function, and we sample it at each data site.  We use half of the data sites as known values and interpolate the other half.

We repeated this experiment with different values for the magnitude of the bump functions, which led to functions of differing smoothness with respect to the graph.  We measured the smoothness of the complete data using $\abs{\cdot}_{H_2^2}$ and compared it with the $\ell_2$-norm of the error of interpolation.  A plot of this comparison is shown in \cref{fig:smoothness_vs_error}. We see a clear trend indicating that the smoothness of the underlying data is closely related to the error of interpolation.

\begin{figure}[ht!]
    \centering
    \includegraphics[width=0.5\textwidth]{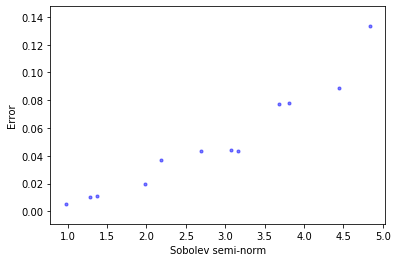}
    \caption{
	Comparison of the smoothness of a given data set and the error of interpolation.      
}
    \label{fig:smoothness_vs_error}
\end{figure}

The bump function that was used is a scaled version of the Wendland function defined by the radial profile 
\begin{equation}
\varphi(r) 
= 
(1-r)^4(4r+1),
\end{equation}
for $r$ in the interval $[0,1]$.

\subsection{Experiment: Comparison with nearest neighbors}

One potential application of our proposed basis functions is kernel-based machine learning on graphs.
The considered problem is regression of energy efficiency data. We predict 
\textit{Heating Load} and 
\textit{Cooling Load} 
based on seven numeric parameters: 
\textit{Relative Compactness}, 
\textit{Surface Area}, 
\textit{Wall Area}, 
\textit{Roof Area}, 
\textit{Overall Height},
\textit{Orientation},
and 
\textit{Glazing Area}. 
The data set comes from the UC Irvine Machine Learning Repository \cite{dheeru17,tsanas12}. 
The data set contains 768 instances.

Our algorithm is the following. 
We normalize each parameter to have mean zero and standard deviation one. 
We construct a graph of all the instances (\textit{known} and \textit{unknown}); 
each instance is a vertex. 
Every vertex is connected to its $K$ nearest neighbors, where $K$ is a connectivity parameter that must be specified by the user.  
The edges are weighted based on the distances between the parameters.  
Then we compute the Lagrange functions at the known data sites. 
Using these basis functions, we interpolate to compute the approximate value for unknown data sites. 
We measure error in terms of mean squared error (MSE) of the approximation.

For the simulation, we split the data set into ten groups for a 10-fold cross-validation. 
For a single test, one of the groups was considered \emph{unknown} while the others were used as \emph{known} data. 
We ran the simulation twenty times, randomizing the data each time. 
The displayed results are an average over these twenty simulations.

We include a comparison with a Nearest Neighbors Regression (NNR) algorithm, \cref{tab:mse}. To make a fair comparison, both algorithms use the same initial clustering of data. After the graph is constructed, the NNR algorithm computes a weighted average of the known neighbors of a given unknown vertex.
We use the same randomized data splits for each algorithm. We ran the experiment using a range of values for the connectivity parameter $K$ in the graph construction. In each case, we see that our method has a lower MSE than NNR.

\begin{table}[ht!]
  \begin{center}
    \caption{Average MSE and standard deviation over twenty simulations with 10-fold cross validation for both \emph{NNR} and our proposed method \emph{Spline}.}
    \label{tab:mse}
\begin{tabular}{c || cc | cc   }
& \multicolumn{2}{c}{Heating Load}   
& \multicolumn{2}{c}{Cooling Load}  \\ 
$K$  & NNR & Spline & NNR & Spline \\ \hline \hline
8 & 5.2319 $\pm$ 0.1778 & 3.7465 $\pm$ 0.1345 & 6.9139 $\pm$ 0.1419 & 5.525 $\pm$ 0.1534\\
10 & 5.2559 $\pm$ 0.1492 & \textbf{3.6792 $\pm$ 0.1071} & 6.7378 $\pm$ 0.1326 & \textbf{5.087 $\pm$ 0.0867}\\
12 & 5.3128 $\pm$ 0.1161 & 3.7629 $\pm$ 0.0837 & 6.801 $\pm$ 0.1355 & 5.2965 $\pm$ 0.1331\\
14 & 5.6949 $\pm$ 0.1033 & 4.2463 $\pm$ 0.0772 & 7.1613 $\pm$ 0.1052 & 5.722 $\pm$ 0.0903\\
16 & 6.266 $\pm$ 0.1099 & 4.9234 $\pm$ 0.0996  & 7.6846 $\pm$ 0.0869 & 6.3822 $\pm$ 0.0926\\
18 & 6.5218 $\pm$ 0.0944 & 5.1405 $\pm$ 0.1031  & 7.9284 $\pm$ 0.1025 & 6.6347 $\pm$ 0.1099\\
\end{tabular}
  \end{center}
\end{table}

\section{Discussion}
\label{sec:discussion}

We have introduced the analog of radial basis functions on graphs and verified properties that are known to hold on continuous domains. Our main result is the decay of the Lagrange functions. Our approach was a spatial domain estimate, rather than the Fourier-based results that are commonly used for uniform data.  This bulk-chasing argument was adapted from the one used on continuous domains for non-uniform data. This connection is encouraging and leads us to believe that additional properties will also carry over. 
A future goal is to show that quasi-interpolating local-Lagrange functions satisfy analogous localization \cite{fuselier13}. 
This is important as it reduces the computational cost of computing the basis functions. The construction of such bases raises questions about how to properly truncate coefficients and the basis functions themselves. Other important properties such as stability with respect to $\ell_p$ are also of interest. Establishing these results should benefit the continuous domain theory as the graph setting is more precise in terms of bounding constants such as $\mu$ of \cref{thm:decay_lagrange} as well as constants appearing in the \textit{footprint radius} of local Lagrange functions.

The potential applications of such bases include kernel-based machine learning algorithms, where data is well represented using a graph framework. Preliminary results are promising, and we expect improved results by adding a refined training and data clustering to our algorithm.

\appendix

\section{Sobolev functions with zeros}
\label{sec:zero_lem}

In this appendix, we consider the unnormalized Laplacian $L = D-A$, where $D$ and $A$ are defined as in \cref{sec:setting}. The Fourier transform of a function is defined in terms of the eigenvalues and eigenvectors of the Laplacian.

\begin{align}
\begin{split}
\widehat{f}(\lambda_n) 
&= 
\innprod{f}{\Lambda_n} \\
f(v)
&=
\sum_{n=0}^{N-1}
\widehat{f}(\lambda_n) \Lambda_n(v)
\end{split}
\end{align}

\begin{customlem}{A.1}\label{lem:zeros}
Let $L$ denote the Laplacian (not normalized) on a graph  $\mathcal{G}$
with $N$ vertices. Let its eigenvectors and eigenvalues be denoted as 
$\set{\Lambda_n}_{n=0}^{N-1}$, 
$\set{\lambda_n}_{n=0}^{N-1}$.
Let $f:\mathcal{V} \rightarrow \mathbb{R}$ 
be a function satisfying $f(v_{0})=0$ for some vertex 
$v_{0} \in \mathcal{V}$. Then
\begin{equation}
	\norm{f}_{\ell_2(\mathcal{G})} 
	\leq
	\frac{\sqrt{N}}{\lambda_1^{\alpha/2}}
	\abs{f}_{H_2^{\alpha}(\mathcal{G})}
\end{equation}
\end{customlem}

\begin{proof}
Our proof is based on the proof of an analogous result in \cite[Chapter 7]{natterer86}. First, we use Parseval's identity to express the left-hand side as a sum of Fourier coefficients
\begin{align}\label{eq:append3}
\begin{split}
	\norm{f}_{\ell_2}^2 
	&=
	\sum_{n=0}^{N-1}
	\abs{\widehat{f}(\lambda_n)}^2\\
	&=
	\abs{\widehat{f}(\lambda_0)}^2 
	+
	\sum_{n=1}^{N-1}
	\abs{\widehat{f}(\lambda_n)}^2.
\end{split}
\end{align}
Since $f$ has a zero at $v_{n_0}$
\begin{equation}\label{eq:append4}
	0
	=
	f(v_{0})
	=
	\widehat{f}(\lambda_0) \Lambda_0(v_{0})
	+
	\sum_{n=1}^{N-1} \widehat{f}(\lambda_n) \Lambda_n(v_{0}).
\end{equation}
The eigenvectors are assumed to have norm 1. In particular, $\Lambda_0$ is the constant vector $1/\sqrt{N}$. Substituting \eqref{eq:append4} into \eqref{eq:append3}, we have
\begin{align}
\begin{split}
	\norm{f}_{\ell_2}^2
	&= 
	\abs{\sqrt{N}\sum_{n=1}^{N-1} 
	\widehat{f}(\lambda_n) \Lambda_n(v_{0})}^2 
	+
	\sum_{n=1}^{N-1}
	\abs{\widehat{f}(\lambda_n)}^2\\ 
	&=
	N\abs{\sum_{n=1}^{N-1} 
	\widehat{f}(\lambda_n) \Lambda_n(v_{0})}^2 
	+
	\sum_{n=1}^{N-1}
	\abs{\widehat{f}(\lambda_n)}^2. 
\end{split}
\end{align}	
Next, we apply the Cauchy-Schwartz inequality
\begin{align}
\begin{split}
	\norm{f}_{\ell_2}^2
	&\leq
	N
	\parenth{\sum_{n=1}^{N-1} \abs{\Lambda_n(v_{0})}^2}
	\parenth{\sum_{n=1}^{N-1} \abs{\widehat{f}(\lambda_n)}^2}
	+
	\sum_{n=1}^{N-1}
	\abs{\widehat{f}(\lambda_n)}^2 \\
	&=
	N
	\parenth{1-\frac{1}{N}}
	\parenth{\sum_{n=1}^{N-1} \abs{\widehat{f}(\lambda_n)}^2}
	+
	\sum_{n=1}^{N-1}
	\abs{\widehat{f}(\lambda_n)}^2 \\
	&=
	\parenth{N-1}
	\parenth{\sum_{n=1}^{N-1} \abs{\widehat{f}(\lambda_n)}^2}
	+
	\sum_{n=1}^{N-1}
	\abs{\widehat{f}(\lambda_n)}^2 \\
	&=
	N
	\sum_{n=1}^{N-1} \abs{\widehat{f}(\lambda_n)}^2
\end{split}
\end{align}
Finally we have
\begin{align}
\begin{split}
	\norm{f}_{\ell_2}^2
	&\leq
	\frac{N}{\lambda_1^{\alpha}}
	\sum_{n=1}^{N-1}
	\abs{\lambda_n^{\alpha/2}
	\widehat{f}(\lambda_n)}^2\\ 
	&=
	\frac{N}{\lambda_1^{\alpha}}
	\abs{f}_{H_2^{\alpha}}^2
\end{split}
\end{align}
\end{proof}

\bibliographystyle{plain}
\bibliography{interp_arxiv}

\end{document}